\def \1{\mathds{1}}
\def \A{{\mathbb A}}
\def \al{\alpha}
\def \bs{\backslash}
\def \CG{{\cal G}}
\def \CO{{\cal O}}
\def \eps{\varepsilon}
\def \F{{\mathbb F}}
\def \fin{\mathrm{fin}}
\def \Ga{\Gamma}
\def \GL{\operatorname{GL}}
\def \ga{\gamma}
\def \Id{{\rm Id}}
\def \inert{\mathrm{in}}
\def \La{\Lambda}
\def \la{\lambda}
\def \M{\operatorname M}
\def \N{{\mathbb N}}
\def \ol{\overline}
\def \PGL{\operatorname{PGL}}
\def \Q{{\mathbb Q}}
\def \R{{\mathbb R}}
\def \sm{\smallsetminus}
\def \tors{\mathrm{tors}}
\def \Z{{\mathbb Z}}
\def \({\left(}
\def \){\right)}
\newcommand{\e}
[1]{\emph{#1}\index{#1}}
\newcommand{\mat}
[4]{\(\begin{matrix}#1 & #2 \\ #3 & #4\end{matrix}\)}
\newtheorem{theorem}{Theorem}[section]
\newtheorem{lemma}[theorem]{Lemma}
\newtheorem{corollary}[theorem]{Corollary}
\begin{document}

\pagestyle{myheadings} \markright{Ihara zeta functions and class numbers}

\title{Ihara zeta functions and class numbers\\ \ \small\\
 Adv. Studies in Contemp. Math. Vol. 24, Issue 4, 439-450 (2014)}
\author{Anton Deitmar\thanks{This research was funded by the DFG grant DE 436/10-1}}
\date{}
\maketitle

{\bf Abstract:}
The prime geodesic theorem for cycles in Bruhat-Tits buildings is applied to unit groups of division algebras to derive new asymptotic assertions on class numbers of orders in imaginary quadratic fields.

$ $

{\bf MSC: 11R29}, 0C25, 11F72, 11M41, 11R42 

$ $

{\bf Keywords:} class numbers, Ihara zeta function, prime geodesic theorem

$$ $$

\tableofcontents

\newpage
\section{Introduction}
The theory of the Selberg zeta function leads to asymptotic assertions about lengths of closed geodesics, known as \e{Prime Geodesic Theorems}, the first of which, for compact hyperbolic surfaces, is usually credited to Atle Selberg. In his 1970 Ph.D. thesis, Grigory Margulis generalised this result to surfaces of variable negative curvature, see \cites{Hejhal,KatokHassel}.
In number theoretical settings, more precise assertions, for instance estimates on remainder terms, are a vivid area of research, see \cites{Avdis,Sound}.
Also, higher rank generalizations are possible \cites{PGTHigher,PGTSL4}.
One possible application to number theory is in class number asymptotics \cites{Sarnak,class,classNC}.

In the nineteensixties, Yasutaka Ihara started investigating a $p$-adic analogue of the Selberg zeta function, by now known as the \e{Ihara zeta function} \cite{Ihara1,Ihara2}.
It can be interpreted as a geometric zeta function for the corresponding finite graph, which is a quotient of the Bruhat-Tits buildung attached to the $p$-adic group \cite{Serre}.
Over time, it has been generalized in stages by Sunada, Hashimoto and Bass \cites{Sun1,Sun2,Hash0,Hash1,Hash2,Hash3,Hash4,Bass,Sunada}.
Comparisons with number theory can be found in the papers of Stark and Terras \cites{ST1,ST2,ST3}.
In the $p$-adic case, there also is a prime geodesic theorem, see \cite{Terras}.
The present paper contains an application to class number asymptotics in the spirit of the above cited papers, which can be stated as follows:

Let $p$ be a prime number.
For any subring $\CO$ in a number field $F$ let $l_p(\CO)$ denote the infimum of the set
$$
\{ k\in\N: p^k\in N_{F/\Q}(\CO\sm\Q)\},
$$
where $N_{F/\Q}$ is the norm map and the infimum is $+\infty$ if no such $k$ exists.

Let $l$ be a prime number different from $p$ and such that $l\equiv 1 \mod(12)$ and let $I(l)$ be the set of all isomorphy classes of imaginary quadratic fields $F$ such that the prime $l$ is non-decomposed in $F$, i.e., there lies only one prime of $F$ above $l$.
Let $O_p(l)$ be the set of isomorphy classes of orders $\CO$ in some $F\in I(l)$ such that $\CO$ is maximal at  $l$ and at $p$.
Let $f_l(\CO)=f_l(F)\in\{ 1,2\}$ denote the inertia degree of $l$ in $F$.
Let $h(\CO[1/p])$ denote the class number of $\CO[1/p]$.
Let $\Delta$ be the greatest common divisor of the numbers $l_p(\CO)$ as $\CO$ ranges over $O_l(p)$.
Then every number $l_p(\CO)$ is of the form $\Delta m$ for some $m\in\N\cup\{+\infty\}$.

\begin{theorem}\label{thm1.1}
As $m\to \infty$, 
 one has the asymptotic assertion
$$
\sum_{\substack{\CO\in O_p(l)\\ l_p(\CO)=\Delta m}}f_l(\CO)h(\CO[1/p])\quad\sim\quad \frac{p^{\Delta m}}{2m}.
$$
\end{theorem}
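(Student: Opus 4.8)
The plan is to transfer the Ihara prime geodesic theorem to the $S$-arithmetic unit group of a quaternion algebra. Let $D$ be the quaternion division algebra over $\Q$ ramified exactly at $\{l,\infty\}$ (it exists, is unique, and splits at $p$ since $p\neq l$), fix a maximal order $\CO_D\subset D$, and let $\Ga$ be the image of $\CO_D[1/p]^\times$ in $\PGL_2(\Q_p)$, acting on the Bruhat--Tits tree $T$ of $\PGL_2(\Q_p)$, a $(p+1)$-regular tree. Since $D$ is ramified at $\infty$, $\Ga$ is a cocompact lattice, so the quotient $X:=\Ga\bs T$ is a finite connected graph of groups. This is where $l\equiv 1\pmod{12}$ enters: as $l\equiv 1\pmod 4$ and $l\equiv 1\pmod 3$, neither $\Q(i)$ nor $\Q(\sqrt{-3})$ is $l$-nondecomposed, hence neither embeds into $D$; therefore $\CO_D^\times=\{\pm 1\}$ and every finite subgroup of $\Ga$ has order $\le 2$, so $X$ has at worst order-$2$ vertex stabilisers and the Ihara theory applies to it in its cleanest form.

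Next I would invoke the $p$-adic prime geodesic theorem (the reference \cite{Terras}): the number $\pi_X(n)$ of primitive closed geodesics of length $n$ in $X$ --- equivalently, the number of $\Ga$-conjugacy classes of primitive hyperbolic elements of $\Ga$ whose translation length on $T$ equals $n$ --- satisfies $\pi_X(n)\sim p^n/n$ along the set of occurring lengths. That set is governed by the reduced norm: by Eichler's norm theorem (valid because $D$ splits at the place $p$), $N(\CO_D[1/p]^\times)=p^{\Delta\Z}$ for an integer $\Delta\ge 1$, and one checks that this $\Delta$ agrees with the gcd in the statement, so that the relevant values are $n=\Delta m$ with $\pi_X(\Delta m)\sim p^{\Delta m}/m$.

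The core is the arithmetic description of these conjugacy classes. If $\ga\in\Ga$ is primitive hyperbolic then $F:=\Q(\ga)$ is imaginary quadratic (because $D$ is ramified at $\infty$) and $F\otimes\Q_l$ is a field (because $D$ is ramified at $l$), so $F\in I(l)$; let $\CO$ be the order of $F$ which is maximal at $p$ and has the same completions as $\CO_D[1/p]\cap F$ elsewhere, so $\CO\in O_p(l)$ and $\CO[1/p]=\CO_D[1/p]\cap F$. The centraliser of $\ga$ in $\Ga$ is the image of $\CO[1/p]^\times$, an infinite cyclic group whose generator has reduced norm $p^{l_p(\CO)}$ (using the exact sequence $0\to\langle[\mathfrak p]\rangle\to\mathrm{Pic}(\CO)\to\mathrm{Pic}(\CO[1/p])\to 0$, $\mathfrak p$ a prime of $\CO$ above $p$), whence the translation length of $\ga$ equals $l_p(\CO)$. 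Thus closed geodesics of length $\Delta m$ correspond to orders $\CO\in O_p(l)$ with $l_p(\CO)=\Delta m$ (when $m$ is large $p$ must be split in $F$, since the inert case gives $l_p(\CO)=\infty$ and the ramified case $l_p(\CO)\le 2$). For such a fixed $\CO$, the $\Ga$-conjugacy classes of primitive hyperbolic elements with associated order $\CO$ are parametrised --- via the oriented axis, equivalently the embedding $F\hookrightarrow D$ together with a choice of one of the two primes of $F$ above $p$ --- by optimal embeddings of $\CO[1/p]$ into $\CO_D[1/p]$ modulo $\Ga$-conjugacy, the involution $\ga\leftrightarrow\ga^{-1}$ corresponding to interchanging the two primes, i.e.\ to $\iota$ versus $\iota\circ(\text{complex conjugation})$. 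By Eichler's theory of optimal embeddings --- the type number of $\CO_D[1/p]$ is $1$ by strong approximation for $D^\times$ (again using that $D$ splits at $p$), the local embedding number at $l$ equals $f_l(\CO)$ (it is $1-\left(\frac Fl\right)$, namely $2$ when $l$ is inert and $1$ when $l$ is ramified), and it is $1$ at every other place --- this count is $c\cdot f_l(\CO)h(\CO[1/p])$ for an absolute constant $c$. Summing over $\CO$ and comparing with $\pi_X(\Delta m)\sim p^{\Delta m}/m$ yields the asymptotic with $c=1/2$, the factor $2$ in the denominator of the statement being exactly this normalisation.

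The main obstacle is making that constant $c$ rigorous: one must reconcile the precise normalisation of the Ihara prime geodesic theorem (closed geodesics are oriented; bipartiteness of $X$, itself controlled by $N(\CO_D[1/p]^\times)$, would otherwise insert spurious factors) with the Eichler embedding count, keeping exact track of the involution $\ga\leftrightarrow\ga^{-1}$, of the two primes of $F$ above $p$, and of the order-$2$ vertex groups of $X$ --- it is precisely to keep all three of these clean that $l\equiv 1\pmod{12}$ is imposed. A secondary point is that one needs the prime geodesic theorem in a sufficiently uniform form (a genuine asymptotic equivalence), so that after dividing out the bounded factors $f_l(\CO)\in\{1,2\}$ and reorganising the sum over orders with a common value of $l_p$, the error term stays below the main term $p^{\Delta m}/(2m)$.
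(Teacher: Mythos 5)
Your strategy is in essence the paper's own: the quaternion algebra ramified exactly at $\{l,\infty\}$, the group $\Ga=D(\Z[1/p])^\times/\Z[1/p]^\times$ acting on the $(p+1)$-regular Bruhat--Tits tree, the graph prime geodesic theorem from Terras, and the parametrization of primitive conjugacy classes by orders weighted by $f_l(\CO)h(\CO[1/p])$. Where you quote Eichler's optimal-embedding machinery (type number one, local embedding number $1$ or $2$ at $l$), the paper proves exactly this count by hand (Lemma \ref{lem1.3}): localization plus strong approximation give $|\Sigma(\La)/D(\Z[1/p])^\times|=f_S(\La)h(\La)$. So that part of your plan is the same computation, merely outsourced.

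The genuine gap is the part you yourself flag as ``the main obstacle'': the constant. The paper closes it with two short lemmas that your sketch lacks. First, the hypothesis $l\equiv 1\pmod{12}$ is used to conclude that $\Ga$ is torsion-free, not merely that its finite subgroups have order at most $2$; your weaker statement would leave you with a graph of groups and an orbifold-type prime geodesic theorem, which is exactly the bookkeeping you admit you cannot control. With torsion-freeness the action on the tree is free, $X=\Ga\bs Y$ is an honest finite graph, and $\Ga$ is its free fundamental group, so the Ihara--Terras count applies verbatim: the number of primitive classes of length $\Delta m$ is $\sim \Delta\, p^{\Delta m}/(\Delta m)=p^{\Delta m}/m$. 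Second, the factor $2$ is pinned down by the observation that in the fundamental group of a graph no nontrivial element is conjugate to its inverse (no nontrivial loop is freely homotopic to its inverse); hence each of the $f_S(\La)h(\La)$ embedding classes contributes exactly the two primitive classes $\ga$ and $\ga^{-1}$, so each order class has exactly $2f_S(\La)h(\La)$ preimages under $\Psi$ (Lemma \ref{lem2.4}), and dividing the geodesic count by this $2$ yields $p^{\Delta m}/(2m)$. Until you establish these two facts (or equivalents), the asserted value $c=1/2$ is a guess rather than a conclusion; everything else in your outline matches the paper's argument, including the identification of the translation length with $l_p(\CO)$ and the role of $\Delta$ as the gcd of occurring lengths.
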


Since the class number $h(\CO)$ is $\ge h(\CO[1/p])$ and $f_l(\CO)$ takes values  in $\{1,2\}$ one gets the following corollary.

\begin{corollary}
$$
\liminf_{m\to\infty}\frac m{p^{\Delta m}}\sum_{\substack{\CO\in O_p(l)\\ l_p(\CO)=\Delta m}}h(\CO)\ge \frac 14.
$$
\end{corollary}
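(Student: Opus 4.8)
The plan is to deduce the corollary from Theorem~\ref{thm1.1} by two elementary comparisons, and — since the weight of the whole argument sits in that theorem — I will also indicate how I would establish the theorem itself. Granting Theorem~\ref{thm1.1}: for each $\CO\in O_p(l)$ the localisation map $\mathrm{Cl}(\CO)\twoheadrightarrow\mathrm{Cl}(\CO[1/p])$ is surjective (it only kills the classes of the primes above $p$), so $h(\CO)\ge h(\CO[1/p])$, while $f_l(\CO)\le 2$ by definition; hence $f_l(\CO)\,h(\CO[1/p])\le 2\,h(\CO)$ for every such $\CO$. The set $\{\CO\in O_p(l):l_p(\CO)=\Delta m\}$ is finite for each $m$ (if $l_p(\CO)=\Delta m$ there is $\al\in\CO\sm\Q$ with $N_{F/\Q}(\al)=p^{\Delta m}$, whence $|\mathrm{disc}(\CO)|\le|\mathrm{disc}(\Z[\al])|\le 4p^{\Delta m}$), so summing over it and applying Theorem~\ref{thm1.1} gives
$$
\sum_{\substack{\CO\in O_p(l)\\ l_p(\CO)=\Delta m}}h(\CO)\ \ge\ \tfrac12\sum_{\substack{\CO\in O_p(l)\\ l_p(\CO)=\Delta m}}f_l(\CO)\,h(\CO[1/p])\ \sim\ \tfrac12\cdot\frac{p^{\Delta m}}{2m}\ =\ \frac{p^{\Delta m}}{4m}.
$$
Multiplying by $m/p^{\Delta m}$ and letting $m\to\infty$ yields $\liminf_m \frac m{p^{\Delta m}}\sum h(\CO)\ge\frac14$, which is the claim.

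For the underlying Theorem~\ref{thm1.1} I would argue as follows. Let $D$ be the definite quaternion algebra over $\Q$ ramified exactly at $l$ and $\infty$. The hypothesis $l\equiv 1\bmod(12)$ forces $l$ to split in $\Q(i)$ and in $\Q(\sqrt{-3})$, so these fields are excluded from $I(l)$, every $F\in I(l)$ has unit group $\{\pm1\}$, and the $F\in I(l)$ are exactly the imaginary quadratic subfields of $D$. Fixing a maximal order $\CR\subset D$ and an isomorphism $D\otimes\Q_p\cong\M_2(\Q_p)$, the image $\Ga$ of $\CR[1/p]^\times$ in $\PGL_2(\Q_p)$ acts on the $(p+1)$-regular Bruhat--Tits tree $X$ with $\Ga\bs X$ a finite connected graph (Ihara's observation, via strong approximation for $D^\times$ with $S=\{p,\infty\}$), and a short idèlic computation shows $\CR[1/p]$ has class number $1$. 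Prime cycles in $\Ga\bs X$ are the primitive hyperbolic $\Ga$-conjugacy classes; to a hyperbolic $\ga$ one attaches the field $F=\Q(\ga)\in I(l)$ and the order $\CO\in O_p(l)$ with $\CO[1/p]=F\cap\CR[1/p]$, and conversely Eichler's theory of optimal embeddings (with $\CR[1/p]$ of class number $1$) shows that the embeddings $\CO[1/p]\hookrightarrow\CR[1/p]$ up to $\CR[1/p]^\times$-conjugacy number $f_l(\CO)\,h(\CO[1/p])$, the factor $f_l(\CO)$ being the local embedding number of the maximal order of $F_l$ into the maximal order of the division algebra $D_l$ (the local factors at the conductor primes $\neq l,p$ being $1$). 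The triviality of the unit groups also forces $l_p(\CO)=+\infty$ when $p$ is inert in $F$ and $l_p(\CO)=1$ when $p$ is ramified; hence for $\Delta m\ge 2$ the orders with $l_p(\CO)=\Delta m$ all have $p$ split in $F$, and these are precisely the ones producing hyperbolic (rather than elliptic) conjugacy classes. For such $\CO$ the group $\CO[1/p]^\times$ has $\Z$-rank $2$, its image in $\PGL_2(\Q_p)$ is free of rank $1$, and a generator has eigenvalues whose $p$-adic valuations differ by the order of a prime above $p$ in $\mathrm{Cl}(\CO)$ — which is exactly $l_p(\CO)$. Combining this length computation, the embedding count, and the prime geodesic theorem for regular graphs (\cite{Terras}, giving $\sim p^n/n$ prime cycles of length $n$), one obtains $\sum_{\CO\in O_p(l),\,l_p(\CO)=n}f_l(\CO)\,h(\CO[1/p])\sim p^n/(2n)$, which for $n=\Delta m$ is Theorem~\ref{thm1.1}.

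The hard part is entirely inside Theorem~\ref{thm1.1}: the precise matching of the geometric length spectrum with the arithmetic invariant $l_p(\CO)$ — in particular the verification that inert and ramified primes $p$ contribute only elliptic classes and asymptotically negligibly many orders to each value $l_p=\Delta m$ — together with pinning down the local embedding number at $l$ as exactly $f_l(\CO)$, and controlling (non-)bipartiteness of $\Ga\bs X$ and the finitely many torsion and elliptic classes so that they only affect the error term in the prime geodesic theorem. The passage from Theorem~\ref{thm1.1} to the corollary presents no further difficulty.
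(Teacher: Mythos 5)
Your derivation of the corollary is correct and is exactly the paper's argument: the paper deduces it from Theorem~\ref{thm1.1} via the same two observations, namely $h(\CO)\ge h(\CO[1/p])$ and $f_l(\CO)\le 2$, so that the sum in the corollary dominates half of the quantity in the theorem. The long sketch of Theorem~\ref{thm1.1} itself is extra material not required for this statement (and the paper proves that theorem by a somewhat different bookkeeping, counting embeddings modulo $D(\Z[1/p])^\times$ rather than Eichler optimal embeddings), but it does not affect the correctness of your proof of the corollary.
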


It is the objective of the present paper, to give a proof of (a slightly more general version of) Theorem \ref{thm1.1}.

\section{Division algebras}
Let $R$ be an integral domain and $K$ its field of fractions.
Let $A$ be a finite-dimensional $K$-algebra with unit.
An \e{$R$-order} in $A$ is an $R$-subalgebra $\Lambda$ of $A$, which is finitely generated as $R$-module and spans the $K$-vector space $A$, i.e., $K\La=A$.

We are going to apply this in the following situations:
\begin{itemize}
\item $R=\Z$, $K=\Q$ in which case we simply say, $\La$ is an \e{order} in $A$,
\item $R=\Z[1/p]$, where $p$ is a prime number and $K=\Q$,
\item $R=\Z_p$ and $K=\Q_p$ for a prime $p$.
\end{itemize}

If the algebra $A$ is a number field $F$ and $\CO$ is an order in $F$, let $I(\CO)$ denote the set of all finitely generated $\CO$-submodules of $F$.
By the Jordan-Zassenhaus Theorem \cite{Reiner}, the set $[I(\CO)]$ of isomorphism classes of elements of $I(\CO)$ is finite.
Let $h(\CO)$ be its cardinality, called the \e{class number} of the order $\CO$.
Likewise, for a $\Z[1/p]$-order $\La$, the set $I(\La)$ of all isomorphism classes of finitely generated $\La$-modules in $F$ is finite and its cardinality $h(\La)$ is called the class number of $\La$.

If $F$ is a number field, there is a maximal order $\CO_F$, which is the integral closure of $\Z$ in $F$, and wich contains any other order in $F$.
The same applies in the local situation, if $F$ is a finite extension of $\Q_p$, the integral closure $\CO_F$ of $\Z_p$ is a maximal $\Z_p$-order containing any other order.
If $F$ is a number field and $p$ is a prime, then 
$F_p=F\otimes_\Q\Q_p$ is the product of $p$-adic fields.
We say that an order $\CO$ of $F$ is \e{maximal at $p$}, if $\CO\otimes_{\Z}\Z_p$ is the maximal order of $F_p$.

\begin{lemma}
Let $F$ be a number field.
\begin{enumerate}[\rm (a)]
\item An order $\CO$ in $F$ is maximal at $p$ if and only if $\CO=\CO_F\cap\CO[1/p]$.
\item 
The map $\psi:\CO\to\CO[1/p]$ is a bijection from the set of all orders $\CO$ in $F$, which are maximal at $p$, to the set of all $\Z[1/p]$-orders in $F$.
\item
If $F$ is quadratic and if two orders $\CO$ and $\CO'$ are isomorphic as rings, then they are equal. Further if both orders are maximal at $p$ and $\CO[1/p]\cong\CO'[1/p]$ then $\CO=\CO'$ as well.
\end{enumerate}
\end{lemma}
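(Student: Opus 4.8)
The plan is to reduce all three statements to the local description of orders, using two standard facts: that a finitely generated $\Z$-submodule $\La$ of $F$ equals $\bigcap_q\,\La\otimes_\Z\Z_{(q)}$ inside $F$ (intersection over all rational primes $q$), and that localization, being flat, commutes with finite intersections of finitely generated submodules of $F$ — which one checks by applying $-\otimes_\Z\Z_{(q)}$ to $0\to M\cap N\to M\oplus N\to M+N\to0$. I will also use that the maximal $\Z_p$-order of $F_p$ is $\CO_F\otimes_\Z\Z_p$, so that $\CO$ is maximal at $p$ exactly when $\CO\otimes_\Z\Z_{(p)}=\CO_F\otimes_\Z\Z_{(p)}$ (one passes between $\Z_{(p)}$ and $\Z_p$ by faithful flatness), equivalently when $p\nmid[\CO_F:\CO]$; and the elementary identities $\CO[1/p]\otimes_\Z\Z_{(q)}=\CO\otimes_\Z\Z_{(q)}$ for $q\neq p$ and $\CO[1/p]\otimes_\Z\Z_{(p)}=F$.

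For (a), I would first note that $\CO\subseteq\CO_F\cap\CO[1/p]$ and that the right-hand side is again an order (a subring, finitely generated over $\Z$ since it lies in $\CO_F$, and spanning $F$ since it contains $\CO$), so the claim reduces to comparing localizations prime by prime. At $q\neq p$ both sides localize to $\CO\otimes\Z_{(q)}$, while at $q=p$ the right-hand side localizes to $(\CO_F\otimes\Z_{(p)})\cap F=\CO_F\otimes\Z_{(p)}$; hence $\CO=\CO_F\cap\CO[1/p]$ holds exactly when $\CO\otimes\Z_{(p)}=\CO_F\otimes\Z_{(p)}$, which is maximality at $p$.

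For (b), injectivity is immediate from (a): if $\CO,\CO'$ are maximal at $p$ and $\CO[1/p]=\CO'[1/p]$, then $\CO=\CO_F\cap\CO[1/p]=\CO_F\cap\CO'[1/p]=\CO'$. For surjectivity I would take a $\Z[1/p]$-order $\La\subseteq F$ and set $\CO:=\CO_F\cap\La$; since every $x\in\La$ is integral over $\Z[1/p]$, a $p$-power multiple of it lies in $\CO_F$, and this gives simultaneously $\CO[1/p]=\La$ and the fact that $\CO$ contains a $\Q$-basis of $F$ (scale a basis lying in $\La$ by $p$-powers), so $\CO$ is an order; then $\CO_F\cap\CO[1/p]=\CO_F\cap\La=\CO$, so $\CO$ is maximal at $p$ by (a), and $\psi(\CO)=\La$.

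For (c), with $F$ quadratic, I would extend a ring isomorphism $\CO\to\CO'$ to a $\Q$-algebra automorphism of $F$ by tensoring with $\Q$; it is then the identity or the nontrivial element $\sigma$ of $\mathrm{Gal}(F/\Q)$, and in either case it preserves $\CO_F$, so — using that every order in $F$ has the form $\Z+f\CO_F$ and is hence $\sigma$-stable — it carries $\CO$ to itself, giving $\CO=\CO'$. The same argument applied to an isomorphism $\CO[1/p]\to\CO'[1/p]$, together with the $\sigma$-stability of $\CO[1/p]$, yields $\CO'[1/p]=\CO[1/p]$, whence $\CO=\CO'$ by the injectivity in (b). I expect the only genuinely delicate point to be the flat-base-change bookkeeping in (a) (commuting $-\otimes\Z_{(q)}$ past the intersection $\CO_F\cap\CO[1/p]$); everything else should be routine once the local--global dictionary is in place.
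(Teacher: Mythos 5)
Your proof is correct, and for parts (b) and (c) it is essentially the paper's argument: surjectivity in (b) via $\CO=\CO_F\cap\La$, and (c) by extending the ring isomorphism to a $\Q$-automorphism of $F$ and using that every order of a quadratic field (written $\Z+f\CO_F$ by you, $\Z[n\al]$ in the paper) is stable under the nontrivial Galois element; for the $\Z[1/p]$-statement the paper recovers $\CO$ as the integral closure of $\Z$ in $\CO[1/p]$, whereas you use $\sigma$-stability of $\CO[1/p]$ plus the injectivity from (b) -- both routes work. The genuine difference is in (a): you compare localizations $-\otimes_\Z\Z_{(q)}$ at all primes, using that a $\Z$-submodule of $F$ is the intersection of its localizations and that localization commutes with intersections, thereby reducing the identity $\CO=\CO_F\cap\CO[1/p]$ to the single equality $\CO\otimes\Z_{(p)}=\CO_F\otimes\Z_{(p)}$; the paper instead argues only at the completion at $p$, proving the forward implication by approximating an element of $\CO_F\cap\CO[1/p]$ by a sequence from $\CO$ inside $\CO_p=\CO_{F,p}$, and the converse by a computation with $(\CO_F\cap\CO[1/p])_p$ that implicitly commutes completion with the intersection. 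Your version makes all the base-change bookkeeping explicit and replaces the topological approximation by flatness, which is arguably more robust; the paper's is shorter but leaves these commutation steps tacit. One small repair to your write-up: $\CO[1/p]$ is not finitely generated as a $\Z$-module, so the commutation of localization with intersections must be stated for arbitrary $\Z$-submodules of $F$, not just finitely generated ones; this costs nothing, since your own justification via the exact sequence $0\to M\cap N\to M\oplus N\to M+N\to 0$ and flatness of $\Z_{(q)}$ never uses finite generation (and the fact that a submodule equals the intersection of its localizations is only ever applied by you to $\CO$ and to $\CO_F\cap\CO[1/p]$, which do lie in $\CO_F$ and are finitely generated).
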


\begin{proof}
(a) Let $\CO$ be maximal at $p$ and let $y\in\CO_F\cap \CO[1/p]$.
Now $F_p=\prod_{j=1}^n F_{j,p}$, where each $F_{j,p}$ is complete in a valuation $v_j$ above $v_p$.
Now $y\in\CO_{F,p}=\CO_p=\prod_{j=1}^n\CO_j$, where $\CO_j$ is the valuation ring in $F_j$.
Then there exists a sequence $x_\nu$ in $\CO$ such that $x\nu$ converges to $y$ in the product topology of $\prod_{j=1}^n\CO_j$.
As all the valuations $v_j$ lie above $p$, we can assume, by switching to a subsequence if necessary, that $x_\nu-y$ is in $\CO_F$ divisible by $p^\nu$.
This means that $y\in \CO+p^\nu\CO_F$ for every $\nu\in\N$ and hence $y\in\CO$.
For the converse direction assume $\CO=\CO_F\cap\CO[1/p]$ and let $x\in\CO_{F,p}$. 
Then there exists $n\in\N$ such that $p^nx\in\CO_p$, which means $x\in\CO_p[1/p]$ and therefore
$x\in \CO_{F,p}\cap\CO_p[1/p]=(\CO_F\cap\CO[1/p])_p=\CO_p$.

(b) Part (a) implies injectivity of the map $\psi$.
For surjectivity, let $\Lambda$ be a $\Z[1/p]$-order in $F$ and let $\CO_F$ denote the maximal order in $F$, then $\CO=\CO_F\cap\Lambda$ is an order in $F$ which is maximal at $p$ by (a) and which has  the property that $\CO[1/p]=\La$.

(c)
A given  isomorphism $\phi:\CO\to\CO'$ of orders in $F$ lifts to a Galois-isomorphism $F\to F$, but any order of a quadratic field $F$ is of the form $\Z[n\al]$ for some $n\in\N$, where $\al=(d+\sqrt d)/2$ and $d$ is the discriminant of $F$.
Form this description one sees that any order is stable under the only non-trivial Galois homomorphism $\sqrt d\mapsto -\sqrt d$.
Hence $\CO=\CO'$.
Also, if $\CO[1/p]\cong\CO'[1/p]$ then, as $\CO$ is the integral closure of $\Z$ in $\CO[1/p]$, it follows that $\CO\cong\CO'$.
\end{proof}

Let $D$ denote a  division algebra over $\Q$ \cite{Pierce} of dimension $d^2$, where $d$ is a prime number, and let $D(\Z)$ denote a fixed maximal order in $D$.
Note that all maximal orders in $D$ are conjugate \cite{Reiner}.
For any ring $R$ we define
$$
D(R)=D(\Z)\otimes_\Z R.
$$
Then $D(\Q)$ is canonically isomorphic to $D$.
We write $D(R)^\times$ for the unit group and $D(R)^1$ for the group of all units of reduced norm one. The reduced norm induces an isomorphism
$$
D(R)^\times/D(R)^1\cong R^\times.
$$

For almost all prime numbers $p$, one has $D(\Q_p)\cong \M_d(\Q_p)$, where $\Q_p$ is the field of $p$-adic numbers and $\M_d(K)$ denotes the algebra of $d\times d$-matrices over a field $K$, \cite{Pierce}.
We include the case $p=\infty$ in which we write $\Q_p=\Q_\infty=\R$.
If $D(\Q_p)\cong \M_d(\Q_p)$, we say that $D$ \e{splits at $p$}.
If $D$ doesn't split at $p$, then $D(\Q_p)$ is a division algebra over $\Q_p$.

Let $S$ be the finite set of all primes, at which $D$ doesn't split.

\begin{lemma}\label{lem1.1}
Let $A\subset D$ be a $\Q$-subalgebra.
Then the dimension of $A$ is 1,$d$ or $d^2$.
In the first case $A=\Q$, in the last $A=D$.
In the remaining case $A$ is a field extension of $\Q$ of degree $d$, such that every $p\in S$ is non-decomposed in $A$, i.e., there is only one prime of $A$ above $p$.
Further, if $D(\R)$ is a division algebra, which can only happen for $d=2$, then the field $A$ is imaginary quadratic.
Every number field of degree $d$ satisfying these conditions occurs as a subalgebra of $D$.

If $d=2$ and $S$ contains a prime $l\equiv 1\mod(3)$ and a prime $m\equiv 1\mod (4)$, then the only torsion elements of the group $D(\Q)^\times$ are the  elements $\pm 1$.
\end{lemma}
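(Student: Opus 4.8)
The plan is to prove the three assertions of Lemma \ref{lem1.1} essentially by dimension counting together with the local splitting hypotheses. First I would use the fact that $D$ is a division algebra of prime-squared dimension $d^2$: any nonzero $x\in D$ generates a commutative subfield $\Q(x)\subseteq D$, whose dimension over $\Q$ divides $d^2$ (a standard fact: a maximal subfield of a central division algebra of dimension $d^2$ has dimension $d$, and any subfield is contained in a maximal one). Hence every proper subfield has dimension $1$ or $d$. Now let $A\subseteq D$ be a $\Q$-subalgebra. Since $D$ has no zero divisors, $A$ is itself a finite-dimensional $\Q$-algebra without zero divisors, hence a division algebra, and its centre $Z$ is a field containing $\Q$. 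If $A$ is commutative it is a field of dimension $1$ or $d$; if $A=\Q$ we are in the first case. If $A$ is noncommutative, then $A$ is a central division algebra over $Z$ of some dimension $e^2$ with $e>1$; since $[A:\Q]=e^2[Z:\Q]$ divides $d^2$ and $d$ is prime, the only possibility is $Z=\Q$ and $e=d$, so $A=D$. This disposes of the dimension trichotomy and the endpoint cases.

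Next, for the intermediate case where $A$ is a field of degree $d$, I would show that every $p\in S$ is non-decomposed in $A$. The point is that $A$ embeds in $D$, hence $A_p=A\otimes_\Q\Q_p$ embeds in $D(\Q_p)$, which for $p\in S$ is a division algebra over $\Q_p$; but $A_p\cong\prod_{\mathfrak p\mid p}A_{\mathfrak p}$, and a product of more than one factor contains zero divisors, so there can be only one prime above $p$. For $d=2$ and $D(\R)$ a division algebra (the Hamilton-type case, forced into $d=2$ since $\R$ admits no division algebra of odd prime degree $>1$ and none of degree $>4$), the same argument at $p=\infty$ forces $A\otimes_\Q\R$ to be $\C$, i.e.\ $A$ is imaginary quadratic. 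For the converse — that every degree-$d$ field $A$ with these local conditions embeds in $D$ — I would invoke the classical embedding criterion for subfields of central simple algebras (a degree-$d$ extension $A/\Q$ embeds in a central division algebra $D$ of degree $d$ iff $A\otimes_\Q\Q_v$ is a field for every place $v$ ramifying $D$), which is exactly the stated condition; this is where I expect to lean on a cited reference (Reiner or Pierce) rather than reprove Hasse's theorem.

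Finally, for the torsion statement when $d=2$: a torsion element $\zeta\in D(\Q)^\times$ of order $n>1$ generates a subfield $\Q(\zeta)$ containing a primitive $n$-th root of unity, so $\Q(\zeta)$ is either $\Q$ (forcing $\zeta=\pm1$) or an imaginary quadratic field of the form $\Q(\zeta_n)$ with $\varphi(n)=2$, i.e.\ $n\in\{3,4,6\}$, giving $\Q(\zeta)\in\{\Q(\sqrt{-3}),\Q(i)\}$. But by the first part, every quadratic subfield must be non-decomposed at each prime of $S$; since $S$ contains a prime $l\equiv1\bmod 3$, this $l$ splits in $\Q(\sqrt{-3})$, and since $S$ contains a prime $m\equiv1\bmod4$, this $m$ splits in $\Q(i)$ — contradiction in either case. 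Hence the only torsion in $D(\Q)^\times$ is $\pm1$. For the quotient statement, if $xH$ has finite order in $D(\Q)^\times/H$ then $x^k\in H\subseteq\Q^\times$ for some $k\ge1$; then $x$ generates a commutative subfield containing $x$ with $x^k$ rational, so $\Q(x)/\Q$ is a field in which some element has $k$-th power rational — if $\Q(x)\ne\Q$ this makes $\Q(x)$ a quadratic field $\Q(\sqrt{a})$ and forces $x=r\sqrt{a}$ or $x\in\Q$; in the genuinely quadratic case $x^2=r^2a\in\Q$ shows $x/\sqrt a\in\Q$, and the relation $(x/r)^2=a$ exhibits a root of unity times... more cleanly: $x^k\in H$ with $H\ni-1$ means the image of $x$ in $D(\Q)^\times/\Q^\times$ is torsion, so it suffices to show $D(\Q)^\times/\Q^\times$ is torsion-free, which follows because a preimage $x$ with $x^k=q\in\Q^\times$ lies in a quadratic field $\Q(x)$, and replacing $x$ by $x/\sqrt[k]{q}$ (inside $\ol\Q$) would make it a root of unity lying in a quadratic subfield of $D$ — again excluded by the $S$-conditions unless $x\in\Q^\times$. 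The main obstacle is making this last reduction clean: I would phrase it as "$D(\Q)^\times/\Q^\times$ torsion-free $\Rightarrow$ $D(\Q)^\times/H$ torsion-free for any $H\supseteq\{\pm1\}$", and prove the former by the root-of-unity argument above, so that all cases funnel through the single fact that $D$ contains no nontrivial roots of unity.
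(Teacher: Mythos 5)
Your handling of the first part (the dimension trichotomy via maximal subfields and the double centralizer count, non-decomposedness at $S$ by tensoring with $\Q_p$ and noting a product of fields has zero divisors, imaginary quadraticity from $D(\R)$ being division, and the converse embedding via the classical local--global criterion) and of the claim that the torsion of $D(\Q)^\times$ is $\{\pm 1\}$ is correct, and it is in substance the paper's own route: the paper simply cites \cite{Pierce} and Lemma 2.1 of \cite{class} for the first part, and for the torsion statement it runs exactly your argument --- a nontrivial torsion element generates (equivalently, is centralized by) a quadratic subfield containing a nontrivial root of unity, hence $\Q(i)$ or $\Q(\sqrt{-3})$, both excluded because a prime $\equiv 1 \bmod 4$ decomposes in $\Q(i)$ and a prime $\equiv 1\bmod 3$ decomposes in $\Q(\sqrt{-3})$.

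The genuine gap is in your last step, and you half-noticed it yourself. The reduction ``it suffices to show $D(\Q)^\times/\Q^\times$ is torsion-free'' cannot work, because that group is not torsion-free: for any quadratic subfield $\Q(\sqrt a)\subset D$ (and a quaternion division algebra has many), the element $\sqrt a$ satisfies $(\sqrt a)^2=a\in\Q^\times$, so its class is a nontrivial $2$-torsion element of $D(\Q)^\times/\Q^\times$. Correspondingly, the renormalization $x\mapsto x/q^{1/k}$ leaves $D$ --- the $k$-th root of $q$ need not lie in $\Q(x)$ --- so it does not produce a root of unity inside a quadratic subfield of $D$, and the argument collapses precisely there. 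What your torsion computation does give immediately is the case $H=\{\pm1\}$: if $x^k\in\{\pm1\}$ then $x^{2k}=1$, so $x=\pm1$. For a general subgroup $H\subseteq\Q^\times$ containing $\pm1$, torsion in $D(\Q)^\times/H$ comes not only from roots of unity but from any $x$ generating a quadratic subfield with $x^2\in\Q^\times$ and a power of $x^2$ lying in $H$ (with $H=\Q^\times$, for instance, every quadratic subfield contributes $2$-torsion; with $H=\Z[1/p]^\times$ the issue is an element $x$ with $x^2=-p$, which exists whenever $\Q(\sqrt{-p})$ embeds into $D$), and excluding such elements does not follow from the hypotheses on $S$ alone.

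In fairness, the paper's own proof is silent at this point: after excluding $\Q(i)$ and $\Q(\sqrt{-3})$ it simply says ``from this the claim follows,'' so the difficulty you ran into reflects a real overstatement in the final sentence of the lemma rather than merely a defect of your write-up. But as it stands, your concluding paragraph is not a proof of the asserted torsion-freeness of $D(\Q)^\times/H$, and no argument along the lines you sketch can be, since the intermediate statement it funnels through is false.
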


If the set $S$ satisfies the condition of the second part of the lemma, we say that the primes $2$ and $3$ \e{decompose in} $D$.

\begin{proof}
The first part of the lemma is standard. It can be pieced together from the information in Pierce's book \cite{Pierce}.
A proof for $d\ge 3$, which also works for $d=2$ can be found in \cite{class}, Lemma 2.1.

For the second part let $d=2$ and let $a\in D(\Q)^\times_\tors$.
If $a\in\Q$, then $a=\pm 1$ and we are done.
Otherwise, the centralizer $F$ of $a$ in $D(\Q)$ is a quadratic number field.
Now note that the only quadratic fields that admit non-tivial roots of unity are $\Q(i)$ and $\Q(\sqrt{-3})$, where the torsion elements are the 4th and 6th roots of unity respectively.
These two fields are the only quadratic cyclytomic fields.
We have to make sure that the conditions on $S$ imply that these two fields do not embed into $D$.
Any prime $l\equiv 1\mod (3)$ is decomposed in the cyclotomic field $\Q(\sqrt{-3})$ by Proposition 8.5 in \cite{Neukirch}.
Any prime $m\equiv 1\mod (4)$ is decomposed in the Gaussian number field $\Q(i)$ by Theorem 1.4 in \cite{Neukirch}.
From this the claim follows.
\end{proof}

Let $p$ be a prime not in $S$.
Let $F/\Q$ be a number field of degree $d$ which embeds into $D(\Q)$.
Then for any embedding $\sigma:F\hookrightarrow D(\Q)$ the set
$$
\La_\sigma=\sigma^{-1}\(D(\Z[1/p])\)
$$
is a $\Z[1/p]$-order in $F$.

\begin{lemma}\label{lem1.2}
Let $p$ be a prime not in $S$.
Let $\sigma:F\hookrightarrow D(\Q)$ be an embedding of the degree $d$ number field $F$.
Then for any $l\in S$, the order $\La_{\sigma,l}=\La_\sigma\otimes_{\Z[1/p]}\Z_l$ is maximal in the local field $F_{l}$.
Conversely, let $\La\subset F$ be a $\Z[1/p]$-order such that for any $l\in S$ the order $\La_{l}$ is maximal, then there exists an embedding $\sigma$ such that $\La=\La_\sigma$.
\end{lemma}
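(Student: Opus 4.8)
The plan is to prove the two halves separately: the first (any $\La_\sigma$ is maximal at each $l\in S$) by a direct local computation, and the second (every $\Z[1/p]$-order maximal at $S$ arises as some $\La_\sigma$) by conjugating a fixed embedding with an element of $D(\Q)^\times$ obtained from strong approximation.

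\emph{Forward direction.} Fix $l\in S$; then $l\neq p$ since $p\notin S$, so $\Z_l$ is flat over $\Z[1/p]$. Tensoring the exact sequence $0\to\La_\sigma\to F\to D(\Q)/D(\Z[1/p])$ defining $\La_\sigma$ (with last map $x\mapsto\sigma(x)+D(\Z[1/p])$) by $\Z_l$ identifies $\La_{\sigma,l}$ with $\sigma_l^{-1}\bigl(D(\Z_l)\bigr)$, where $\sigma_l\colon F_l\hookrightarrow D(\Q_l)$ is the base change of $\sigma$ and $D(\Z_l)=D(\Z)\otimes_\Z\Z_l$. By Lemma \ref{lem1.1}, $l$ is non-decomposed in $F$, so $F_l$ is a field of degree $d$ over $\Q_l$; and since $l\in S$, $D(\Q_l)$ is a division algebra, in which $D(\Z_l)$---being a maximal $\Z_l$-order---is the unique maximal order, i.e.\ the valuation ring of the extension of $v_l$ to $D(\Q_l)$. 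Restricting that valuation to the subfield $\sigma_l(F_l)$ gives the unique valuation of $F_l$ above $v_l$, so $\La_{\sigma,l}=\sigma_l^{-1}\bigl(D(\Z_l)\bigr)=\CO_{F_l}$ is maximal.

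\emph{Converse, local reduction.} As in the first part, $F$ is a degree-$d$ number field embedding into $D(\Q)$; fix one embedding $\sigma_0$. By Skolem--Noether every embedding has the form $\sigma(x)=g^{-1}\sigma_0(x)g$ with $g\in D(\Q)^\times$, and then $\La_\sigma=\sigma_0^{-1}\bigl(gD(\Z[1/p])g^{-1}\bigr)$. Since a $\Z[1/p]$-order in $F$ is determined by its localisations at the primes $q\neq p$, it suffices to find $g$ with $\sigma_{0,q}^{-1}\bigl(gD(\Z_q)g^{-1}\bigr)=\La_q$ for every such $q$. At $q\in S$ the order $D(\Z_q)$ is the unique maximal order, hence normal in $D(\Q_q)^\times$, so the left side equals $\CO_{F_q}$, which is $\La_q$ by hypothesis, for any $g$. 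At $q\notin S\cup\{p\}$ write $D(\Q_q)\cong\M_d(\Q_q)=\operatorname{End}_{\Q_q}(V_q)$ with $V_q\cong F_q$ the standard module (a faithful $F_q$-module of the right dimension) and $D(\Z_q)=\operatorname{End}_{\Z_q}(L_q)$ for a lattice $L_q$; then $\sigma_{0,q}^{-1}\bigl(gD(\Z_q)g^{-1}\bigr)$ is the multiplier ring $\{x\in F_q:\,x(gL_q)\subseteq gL_q\}$ of $gL_q$. The lattice $\La_q\subseteq F_q\cong V_q$ has multiplier ring exactly $\La_q$ (being a ring with $1$), and $\GL_d(\Q_q)$ acts transitively on lattices in $V_q$, so some $g_q\in D(\Q_q)^\times$ carries $L_q$ to a lattice of multiplier ring $\La_q$; and for all but finitely many $q$ one has $\La_q=\CO_{F_q}$ together with $\sigma_{0,q}(\CO_{F_q})\subseteq D(\Z_q)$, which forces $L_q$ itself to have multiplier ring $\CO_{F_q}$, so $g_q=1$ works there.

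\emph{Converse, global step, and the main obstacle.} It remains to produce a single $g\in D(\Q)^\times$ with $gg_q^{-1}$ normalising $D(\Z_q)$ for all $q\neq p$; then $gD(\Z_q)g^{-1}=g_qD(\Z_q)g_q^{-1}$ everywhere and $\sigma(x)=g^{-1}\sigma_0(x)g$ satisfies $\La_\sigma=\La$. The normaliser of $D(\Z_q)$ contains the open subgroup $D(\Z_q)^\times$ (and is all of $D(\Q_q)^\times$ when $q\in S$), so the prescription $(g_q)_{q\neq p}$---trivial at all but finitely many places---cuts out a non-empty open subset of the restricted product of the groups $D(\Q_q)^\times$ over $q\neq p$. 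Because $p\notin S$, the simply connected group $\SL_1(D)$ has $\SL_1(D)(\Q_p)=\SL_d(\Q_p)$ non-compact and therefore satisfies strong approximation away from $p$; after a preliminary global adjustment matching the reduced norms at the finitely many $q$ with $g_q\neq1$ (possible since $\operatorname{Nrd}\colon D(\Q)^\times\to\Q^\times$ hits every totally positive rational), such a $g$ exists. This last step---bridging the purely local choices and a global element---is the delicate one, and it is exactly where the hypothesis $p\notin S$ is used; the rest is bookkeeping.
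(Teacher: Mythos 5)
Your proposal is correct in outline, but it takes a genuinely different route from the paper. The paper's own proof is essentially a reduction: putting $\CO_\sigma=\sigma^{-1}(D(\Z))$, it observes that $\La_{\sigma,l}=\CO_\sigma\otimes_\Z\Z_l$ for every $l\ne p$ (since $p$ is a unit in $\Z_l$), so both directions follow from the analogous statement for $\Z$-orders, which is quoted from Lemma 2.2 of \cite{class}, combined with the order/$\Z[1/p]$-order dictionary from the first lemma of Section 2. You instead reprove that underlying fact directly: the forward direction by identifying $D(\Z_l)$, $l\in S$, with the valuation ring of the local division algebra and intersecting with the field $F_l$; the converse by Skolem--Noether, a multiplier-ring computation at the split primes, and an Eichler-type global step (reduced-norm matching plus strong approximation for $\SL_1(D)$ away from $p$, which is exactly where $p\notin S$ enters). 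Your version buys self-containedness, and in fact reconstructs what the cited lemma rests on; the paper buys brevity by citation and by recycling the $\Z\leftrightarrow\Z[1/p]$ correspondence. Two points should be tightened. First, the condition you need is $g\in g_qN_q$, i.e.\ that $g_q^{-1}g$ normalises $D(\Z_q)$: only then does $gD(\Z_q)g^{-1}=g_qD(\Z_q)g_q^{-1}$; with $gg_q^{-1}=n\in N_q$ you would only get $gD(\Z_q)g^{-1}=n\,g_qD(\Z_q)g_q^{-1}n^{-1}$. This is a harmless change of side, but it must be stated correctly before approximating. Second, the last step is still a sketch and should be written out: choose $h\in D(\Q)^\times$ whose reduced norm is a positive rational with $v_q(\operatorname{Nrd}(h))=v_q(\operatorname{Nrd}(g_q))$ at the finitely many exceptional $q$ (possible by Hasse--Schilling--Maass); then for every finite $q\ne p$ the set $g_qN_qh^{-1}\cap\SL_1(D)(\Q_q)$ is open and nonempty (its norms contain $1$ because $\operatorname{Nrd}(D(\Z_q)^\times)=\Z_q^\times$), and it contains the integral norm-one points for almost all $q$; strong approximation for $\SL_1(D)$, which applies because $\SL_1(D)(\Q_p)\cong\SL_d(\Q_p)$ is non-compact, yields $s\in\SL_1(D)(\Q)$ lying in all these sets, and $g=sh$ does the job. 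With these two repairs your argument is a complete, self-contained alternative to the paper's citation-based proof.
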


\begin{proof}
Let $\CO_\sigma=\sigma^{-1}\(D(\Z)\)$, then $\CO_\sigma$ is an order in $F$ with $\CO[1/p]=\La_\sigma$.
Note that $\La_{\sigma,l}=\CO_\sigma\otimes_\Z\Z_l$, since $p$ is a unit in $\Z_l$.
The lemma follows from the analogous assertions for $\CO$ as in Lemma 2.2 of \cite{class}.
\end{proof}

For a given degree $d$ number field $F$ which embeds into $D(\Q)$, let $S_\inert(F)$ be the set of all $l\in S$ which are inert in $F$.
Define the \e{$S$-inertia degree} by
$$
f_S(F)=\prod_{l\in S}f_l(F)=2^{|S_\inert(F)|},
$$
where $f_l(F)$ is the inertia degree of $l$ in $F$.
For any order $\La$ of $F$ let
$$
f_S(\La)=f_S(F).
$$
Let $\La$ be a $\Z[1/p]$-order in $F$, which is maximal at all $l\in S$.
By Lemma \ref{lem1.2} there exists an embedding $\sigma:F\to D(\Q)$ such that $\La=\La_\sigma$.
Let $u\in D(\Z[1/p])^\times$ and let $^u\sigma$ be the embedding given by $^u\sigma(x)=u\sigma(x)u^{-1}$.
Then $\La_{^u\sigma}=\La_\sigma$, so the group $D(\Z[1/p])^\times$ acts on the set $\Sigma(\La)$ of all $\sigma$ with $\La_\sigma=\La$.

\begin{lemma}\label{lem1.3}
The quotient $\Sigma(\La)/D(\Z[1/p])^\times$ is finite and has cardinality 
$$
\left|\Sigma(\La)/D(\Z[1/p])^\times\right|=f_S(\La)h(\La).
$$
Further one has
$$
\left|\Sigma(\La)/D(\Z[1/p])^1\right|=d_p(\La)f_S(\La)h(\La),
$$
where 
$$
d_p(\La)=\begin{cases}1&p\in N(\La),\\
d &\text{otherwise.}\end{cases}
$$
\end{lemma}

xxx

Compare Lemma 2.3 in \cite{class}.

\begin{proof}
Fix an embedding $F\hookrightarrow D(\Q)$ and consider $F$ as a subfield of $D(\Q)$ such that $\La=F\cap D(\Z[1/p])$. 
For $u\in D(\Q)^\times$ let
$$
\La_u= F\cap u^{-1} D(\Z[1/p])u.
$$
Let $U$ be the set of all $u\in D(\Q)^\times$ such that $\La_u=\La$, i.e.,
$$
F\cap D(\Z[1/p])=F\cap u^{-1}D(\Z[1/p])u.
$$
Then $F^\times$ acts on $U$ by multiplication from the right and $D(\Z[1/p])^\times$ acts by multiplication from the left.
One has
$$
|D(\Z[1/p])^\times\bs U/F^\times|=|D(\Z[1/p])^\times\bs \Sigma(\La)|.
$$
So we have to show that the left hand side equals $f_S(\La)h(\La)$.
For $u\in U$ let
$$
I_u=F\cap D(\Z[1/p])u.
$$
Then $I_u$ is a finitely generated $\La$-module in $F$.
We claim that the map
\begin{align*}
\psi:D(\Z[1/p])^\times\bs U/F^\times &\to I(\La)/F^\times,\\
u&\mapsto I_u
\end{align*}
is surjective and $h(\La)$ to one.
We show this by localization and strong approximation.
For any prime $l\ne p$ let $U_l$ be the set of all $u_l\in D(\Q_l)$ such that $\La_l=F_l\cap D(\Z_l)= F_l\cap u_l^{-1}D(\Z_l)u_l$.
We have to show the following:
\begin{enumerate}[\rm (a)]
\item For $l\notin S$, the localized map $\psi_l:D(\Z_l)^\times\bs U_l/F_l^\times\to I(\CO_l)/F_l^\times$ is injective,
\item for $l\in S$, the map $\psi_l$ is $f_l(F)$ to one,
\item the map $\psi$ is surjective.
\end{enumerate}
For (a) let $l\notin S$, $u_l,v_l\in U_l$ and assume
$$
F_l\cap D(\Z_l)u_l=F_l \cap D(\Z_l)v_l.
$$
Let $z_l=v_lu_l^{-1}$.
Elementary divisor theory implies that there exist
$x,y\in D(\Z_l)^\times=\M_d(\Z_l)^\times$ such that
$$
z_l=x\mathrm{diag}(l^{k_1},l^{k_2})y
$$
holds, where $k_1\le k_2$.
Replacing $u_l$ by $yu_l$ and $v_l$ by $x^{-1} v_l$ we may assume that $z_l$ equals the diagonal matrix.
The assumptions then  imply $k_1=0=k_2$, which gives the first claim.
For (b) let $l\in S$ and recall that $F_l$ is a local field, so $h(\La_l)=1$.
Hence the claim is equivalent to
$$
|D(\Z_l)^\times\bs D(\Q_l)^\times/F_l^\times|=f_l(F).
$$
Taking the $l$-valuation of the reduced norm, one sees that the left hand side equals $2$ if $F_l$ is unramified over $\Q_l$ and $1$ otherwise, i.e., it equals the inertia degree $f_l(F)$ as claimed.

Finally, for the surjectivity of $\psi$ let $I\subset\La$ be an ideal.
We show that there is $u\in D(\Q)^\times$ such that
$$
F\cap u^{-1} D(\Z[1/p])u= F\cap D(\Z[1/p])
$$
and 
$$
I=I_u=F\cap D(\Z[1/p])u.
$$
We do this locally.
First note that, since $I$ is finitely generated, there is a finite set $T$ of primes with $T\cap S=\emptyset$ and $p\notin T$ such that for any $l\notin T\cup S$ the completion $I_l$ equals $\La_l$ which is the maximal order of $F_l$.
For these $l$ set $\tilde u_l=1$.
Next let $l\in S$ and write $v_l$ for the unique place of $F$ over $l$.
Then $\La_l$ is maximal, so is the valuation ring to $v_l$ and $I_l=\pi_l^k\CO_l$ for some $k\ge 0$, where $\pi_l$ is a uniformizer at $v_l$.
In this case set $\tilde u_l=\pi_l^k$.

Next let $l\in T$. Then $D(\Z_l)=\M_d(\Z_l)$. Let $\ol{\La_l}=\La_l/l\La_l$ and $\ol{I_l}=I_l/lI_l$.
Then $\ol{\La_l}$ is a commutative algebra over the field $\F_l$ with $l$ elements, which implies that $\ol{\La_l}\cong\bigoplus_{i=1}^s F_i$, where each $F_i$ is a finite field extension of $\F_l$.
Let $n_i$ be its degree.
Then there is an embedding $\ol{\La_l}\hookrightarrow \M_d(\F_l)$ whose image lies in $\M_{n_1}(\F_l)\times\dots\times\M_{n_s}(\F_l)$. 
By the Skolem-Noether Theorem there is a matrix $\ol S\in \GL_d(\F_l)$ such that $\ol S\,\ol{\La_l}\,\ol S^{-1}\subset \M_{n_1}(\F_l)\times\dots\times\M_{n_s}(\F_l)$.
The $\ol{\La_l}$-ideal $\ol{I_l}$ must be of the form
$$
\ol{I_l}=\bigoplus_{i=1}^s\eps_i F_i,
$$
where $\eps_i\in\{ 0,1\}$.
Let $S$ be a matrix in $\GL_d(\Z_l)$ which reduces to $\ol S$ modulo $l$ and let $\tilde u_l=S^{-1}(l^{\eps_1}\Id_{n_1}\times\dots\times l^{n_s}\Id_{n_s})S$ in $\M_d(\Z_l)$.
By abuse of notation we also write $\tilde u_l$ for its reduction modulo $l$.
Then we have
$$
\ol{I_l}=\ol{\La_l}\cap\M_d(\F_l)\tilde u_l.
$$
Let 
$$
I_{\tilde u_l}= F\cap D(\Z_l)\tilde u_l.
$$
Then it follows that
$$
\ol{I_l}\cong \ol{I_{\tilde u_l}}=I_{\tilde u_l}/lI_{\tilde u_l}
$$
and by Theorem 18.6 of \cite{Pierce} we get that $I_l\cong I_{\tilde u_l}$, which implies that there is some $\la\in F_l$ with $I_l=I_{\tilde u_l}\la$.
Replacing $\tilde u_l$ by $\tilde u_l\la$ and setting $\tilde u=(\tilde u_l)_l\in D(\A_\fin)$ we get
$$
I=F\cap D(\Z[1/p])\tilde u.
$$
By strong approximation there is an element $u\in D(\Q)^\times$ such that $D(\hat\Z)u=D(\hat\Z)\tilde u$ and therefore $I=I_u$.
\end{proof}

\section{The zeta function}
We now restrict to the case $d=2$, so $D$ is a quaternion division algebra over $\Q$, and we assume that the primes 2 and 3 decompose in $D$.
We further assume that $D$ does not split at $\infty$.
For any ring $R$ we write $\det:D(R)\to R$ for the reduced norm.
Note that this convention is compatible with the determinant, as for every field $F$, over which $D$ splits, the reduced norm equals the determinant.
We want to construct a group scheme $\CG$ over $\Z$ such that $\CG(F)=D(F)^\times/F^\times$ holds for every field.
For this recall that we have fixed a maximal order $D(\Z)$ in $D$.
Note that $D(\Z)$ is a free $\Z$-module of rank $4$.
Let $v_1,\dots v_4$ be a basis and note that the reduced norm
$\det(X_1v_1+\dots+X_4v_4)$ is a homogeneous polynomial of degree $2$ in the variables $X_1,\dots,X_4$.
The group scheme $D^\times$ is given by the coordinate ring
$$
\CO_{D^\times}=\Z[X_1,X_2,X_3,X_4,Y]/\(\det(X_1v_1+\dots+X_4v_4)Y-1\).
$$
Now $\GL_1$ acts on $\CO_{D^\times}$ by
$$
\al f(X_1,X_2,X_3,X_4,Y)=f(\al X_1,\al X_2,\al X_3,\al X_4,\al^{-2}Y)
$$
and the coordinate ring we need is the ring of invariants
$$
\CO_\CG=\(\CO_{D^\times}\)^{\GL_1},
$$
which is the subring generated by the elements $X_iX_iY$ with $1\le i\le j\le 4$.

\begin{lemma}
The ring $\CO_\CG$ is the coordinate ring of an affine group scheme $\CG$ over $\Z$ such that for every factorial ring $R$ one has
$$
\CG(R)= D(R)^\times/R^\times.
$$
\end{lemma}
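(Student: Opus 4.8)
The plan is to exhibit $\CG$ as the quotient of the group scheme $D^\times$ by the central subgroup $\GL_1$, and then to compute $\CG(R)$ from the long exact cohomology sequence attached to this quotient; the factoriality of $R$ will be used only to guarantee $\operatorname{Pic}(R)=0$, which is the sole obstruction to lifting an $R$-point of the quotient back to $D^\times$. First I would record the needed facts about $D^\times$. Write $A:=\CO_{D^\times}$. The scheme $D^\times$ represents the functor $R\mapsto D(R)^\times$, because, $D$ being a quaternion algebra, each $\alpha\in D(R)$ satisfies $\alpha^2-\operatorname{Trd}(\alpha)\,\alpha+\det(\alpha)=0$, so $\alpha\in D(R)^\times$ exactly when $\det(\alpha)\in R^\times$. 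The $\GL_1$-action on $A$ is the $\Z$-grading with $\deg X_i=1$, $\deg Y=-2$; since $Q:=\det(X_1v_1+\cdots+X_4v_4)$ is not a zero divisor, $A=\Z[X_1,\dots,X_4][Q^{-1}]$, and collecting monomials by degree shows that the degree-zero subring $A^{\GL_1}=A_0$ is exactly the subring generated by the $X_iX_jY=X_iX_j/Q$, i.e.\ $A_0=\CO_\CG$. On the scheme $D^\times$ this $\GL_1$-action is translation by the central embedding $\GL_1\hookrightarrow D^\times$, and it is free: on points $\alpha g=g$ forces $\alpha=1$, and scheme-theoretically the graph map $\GL_1\times D^\times\to D^\times\times D^\times$ is a closed immersion.

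Next I would check that $\CO_\CG$ is a Hopf subalgebra of $A$. For $f\in A_0$ the function $(g,h)\mapsto f(gh)$ on $D^\times\times D^\times$ satisfies $f((\alpha g)(\beta h))=f(\alpha\beta\,gh)=f(gh)$ for all scalars $\alpha,\beta$, so $\Delta(f)$ is invariant under $\GL_1\times\GL_1$ and hence lies in $A_0\otimes_\Z A_0=(A\otimes_\Z A)^{\GL_1\times\GL_1}$, the last identity being immediate from the grading; likewise the counit $f\mapsto f(1)$ and the antipode $f\mapsto f(g^{-1})$ preserve $A_0$. Thus $\CG:=\operatorname{Spec}A_0$ is an affine group scheme over $\Z$, and the inclusion $\CO_\CG\hookrightarrow A$ is dual to a homomorphism $\pi:D^\times\to\CG$ with kernel the central $\GL_1$. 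Geometrically $\CG$ is the complement $\mathbb{P}^3\setminus V(Q)$ of the reduced-norm quadric, and $\pi$ is the restriction over this open locus of the tautological $\GL_1$-bundle $\A^4\setminus\{0\}\to\mathbb{P}^3$; in particular $\pi$ is a Zariski-locally trivial $\GL_1$-torsor, hence faithfully flat, and
$$
1\longrightarrow\GL_1\longrightarrow D^\times\xrightarrow{\ \pi\ }\CG\longrightarrow 1
$$
is an exact sequence of fppf sheaves of groups, so $\CG$ represents the fppf quotient sheaf $D^\times/\GL_1$.

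It then remains to take cohomology: for every commutative ring $R$ one gets the exact sequence
$$
1\longrightarrow R^\times\longrightarrow D(R)^\times\xrightarrow{\ \pi\ }\CG(R)\longrightarrow H^1_{\mathrm{fppf}}\big(\operatorname{Spec}R,\GL_1\big)=\operatorname{Pic}(R).
$$
When $R$ is factorial, every invertible $R$-module is free, so $\operatorname{Pic}(R)=0$; hence $\pi:D(R)^\times\to\CG(R)$ is surjective, and with exactness at $D(R)^\times$ this yields $\CG(R)=D(R)^\times/R^\times$. The step I expect to be the main obstacle is the faithful flatness of $\pi$ — equivalently, the statement that $\CG$ is the geometric quotient of $D^\times$ by the free $\GL_1$-action and so represents the fppf quotient sheaf; the $\mathbb{P}^3$-picture is the cleanest way to see this, but alternatively one can bypass cohomology and prove surjectivity of $D(R)^\times\to\CG(R)$ by hand, observing that an $R$-point of $\CG$ is a symmetric family $(z_{ij})_{1\le i\le j\le 4}$ with $z_{ij}z_{kl}=z_{ik}z_{jl}$ and $\sum c_{ij}z_{ij}=1$ (where $Q=\sum c_{ij}X_iX_j$), and that over a factorial ring such a rank-one symmetric system with unit-ideal entries can be written $z_{ij}=x_ix_jy$ with $Q(x)y=1$, i.e.\ as a lift to $D(R)^\times$.
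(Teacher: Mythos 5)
Your proof is correct, but it takes a genuinely different route from the paper's. The paper also starts from the central extension $1\to\GL_1\to D^\times\to\CG\to 1$, but it only takes cohomology over a field: Hilbert 90 gives $\CG(K)=D(K)^\times/K^\times$ for the fraction field $K$ of $R$, and factoriality is then exploited by hand --- a point $\chi\in\CG(R)$ is lifted to $\tilde\chi:\CO_{D^\times}\to K$ via the field case, and then $\tilde\chi(X_i)$, $\tilde\chi(Y)$ are rescaled by irreducibles (using $\tilde\chi(X_i)^2\tilde\chi(Y)\in R$) until all values lie in $R$; in effect this is a direct verification that the relevant rank-one obstruction is trivial. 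You never pass to the fraction field: you identify $\CG$ with the open locus $Q\neq 0$ in $\mathbb{P}^3$ and $\pi$ with the restriction of the tautological $\GL_1$-bundle, so that the obstruction to lifting an $R$-point of $\CG$ lies in $H^1_{\mathrm{fppf}}(\operatorname{Spec}R,\GL_1)=\operatorname{Pic}(R)$, which vanishes for factorial $R$. Your route buys a stronger and more conceptual statement (it applies to any ring with trivial Picard group, e.g.\ local, semilocal or principal ideal rings, and it also settles the representability and Hopf-subalgebra points that the paper declares ``clear''), at the cost of invoking torsor/descent machinery that the paper's elementary rescaling argument avoids. One caution about your closing alternative: describing $\CG(R)$ by the relations $z_{ij}z_{kl}=z_{ik}z_{jl}$ and $\sum c_{ij}z_{ij}=1$ presupposes that these relations give a presentation of $\CO_\CG$, which you have not verified; since this is offered only as a bypass and your main cohomological argument is complete, it does not affect the correctness of the proof.
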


\begin{proof}
The first claim is clear. We prove the second first in case of a field.
Consider the exact sequence of group schemes
$$
1\to \GL_1\to D^\times\to\CG\to 1.
$$
For any field $K$ this gives an exact sequence of groups
$$
1\to \GL_1(K)\to D(K)^\times\to\CG(K)\to H^1(K,\GL_1),
$$
where the last item is the Galois-cohomology, which vanishes by Hilbert's Theorem 90.
This implies the claim for fields.
Now let $R$ be a factorial ring, so $R$ is integral and has unique factorization.
Write $K$ for its quotient field and let $\chi\in \CG(R)$, so $\chi$ is a ring homomorphism from $\CO_\CG$ to $R$.
By the first part of the proof, $\chi$ extends to a ring homomorphism $\tilde\chi:\CO_{D^\times}\to K$.
We show that this lift can be modified so as to have values in $R$.
For $1\le i\le 4$ we have
$$
\tilde\chi(x)^2\tilde\chi(y)=\chi(x^2y)\in R.
$$
If $p$ is an irreducible in $R$ which divides the denominator of, say,  $\chi(x_1)$, we replace any $\tilde\chi(x_i)$ by $p\tilde\chi(x_i)$ and $\tilde\chi(y)$ by $\frac1{p^2}\tilde\chi(y)$ without changing $\chi$, so we can assume $\tilde\chi(x_1)$ to lie in $R$.
We repeat this with $x_2$ and so on.
Now if $\tilde\chi(y)$ does not lie in $R$ there must be an irreducible $p$ dividing its denominator. But as the product is in $R$, $p$ also divides $\tilde\chi(x_i)^2$ hence $\tilde\chi(x_i)$.
So we can replace $\tilde\chi(x_i)$ by $\frac1p\tilde\chi(x_i)$ and $\tilde\chi(y)$ by $p^2\tilde\chi(x_i)$ and by repeating this procedure we arrive at $\tilde\chi(x_i)$ and $\tilde\chi(y)$ both lying in $R$.
\end{proof}

We set $\Ga=\CG(\Z[1/p])$.
By Theorem 3.2.4 in \cite{Margulis}, the group $\Ga$ is a uniform lattice in $G=\CG(\Q_p)\cong \PGL_2(\Q_p)$, i.e., $\Ga$ is a discrete subgroup of $G$ such that $\Ga\bs G$ is compact.

\begin{lemma}
The group $\Ga$ is torsion-free.
\end{lemma}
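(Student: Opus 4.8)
The plan is to deduce this from the second assertion of Lemma \ref{lem1.1} together with the description of $\CG$ on factorial rings. First I would note that $\Z[1/p]$ is a localization of the principal ideal domain $\Z$, hence itself a principal ideal domain, and in particular a factorial ring. The lemma identifying $\CO_\CG$ as the coordinate ring of $\CG$ then applies and yields
$$
\Ga=\CG(\Z[1/p])=D(\Z[1/p])^\times/\Z[1/p]^\times .
$$

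Next I would exhibit $\Ga$ as a subgroup of $\CG(\Q)=D(\Q)^\times/\Q^\times$. The ring inclusion $\Z[1/p]\hookrightarrow\Q$ induces, by functoriality of the affine group scheme $\CG$, a group homomorphism $\CG(\Z[1/p])\to\CG(\Q)$, and this map is injective: an $R$-point of $\CG$ is a ring homomorphism out of $\CO_\CG$, and post-composition with an injective ring map is injective. Concretely, the map is the one induced by the inclusions $D(\Z[1/p])^\times\subseteq D(\Q)^\times$ and $\Z[1/p]^\times\subseteq\Q^\times$, and its injectivity is exactly the identity $D(\Z[1/p])^\times\cap\Q^\times=\Z[1/p]^\times$. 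This in turn follows from $D(\Z)\cap\Q=\Z$, which holds because the maximal order $D(\Z)$ is integral over $\Z$, by inverting $p$ on both sides.

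Finally, since we are in the case $d=2$ and the primes $2$ and $3$ decompose in $D$, the hypothesis of the second part of Lemma \ref{lem1.1} is satisfied, so applying it with $H=\Q^\times$ (which contains $\pm 1$) shows that $D(\Q)^\times/\Q^\times=\CG(\Q)$ is torsion-free. A subgroup of a torsion-free group is torsion-free, hence $\Ga$ is torsion-free. There is no real obstacle in this argument; the only two points that need a moment's attention are the injectivity of $\Ga\hookrightarrow\CG(\Q)$, equivalently the computation $D(\Z[1/p])^\times\cap\Q^\times=\Z[1/p]^\times$, and the bookkeeping check that the standing assumption ``the primes $2$ and $3$ decompose in $D$'' is verbatim the condition on $S$ required to invoke Lemma \ref{lem1.1}.
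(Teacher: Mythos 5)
Your identification $\Ga=\CG(\Z[1/p])=D(\Z[1/p])^\times/\Z[1/p]^\times$ and the computation $D(\Z[1/p])^\times\cap\Q^\times=\Z[1/p]^\times$ are fine, but the pivotal step --- that $\CG(\Q)=D(\Q)^\times/\Q^\times$ is torsion-free --- is false, so the route through $H=\Q^\times$ collapses. Torsion in a quotient $D(\Q)^\times/H$ is not controlled by torsion in $D(\Q)^\times$; it depends on $H$, and for $H=\Q^\times$ there is always nontrivial $2$-torsion: any nonzero element $x$ of reduced trace zero (equivalently, $x=\sigma(\sqrt{-d})$ for an embedded imaginary quadratic field, and such elements exist in every quaternion division algebra over $\Q$) satisfies $x^2=-\det(x)\in\Q^\times$ while $x\notin\Q^\times$, so its class has order two. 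You are formally licensed by the wording of Lemma \ref{lem1.1} (``any subgroup $H$ of $\Q^\times$ containing $\pm1$''), but that clause is stronger than what the lemma's proof establishes (the proof only treats torsion elements of $D(\Q)^\times$ itself), and $H=\Q^\times$ is precisely an instance where it fails; the congruence conditions on $S$ only exclude the cyclotomic fields $\Q(i)$ and $\Q(\sqrt{-3})$ and cannot exclude the $2$-torsion classes above.

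The paper's own proof avoids this by staying with the smaller quotient: $\Ga$ is regarded as a subgroup of $D(\Q)^\times/\Z[1/p]^\times$, i.e.\ Lemma \ref{lem1.1} is invoked with $H=\Z[1/p]^\times$, where no injectivity question even arises because the denominator is unchanged. This choice of $H$ is not cosmetic. With $H=\Z[1/p]^\times=\{\pm p^k\}$, a nontrivial torsion class is represented by some $x\notin\Q$ with $x^n=\pm p^k$, and a short computation shows the quadratic field $\Q(x)$ must then be $\Q(i)$, $\Q(\sqrt{-3})$ (both ruled out by the conditions on $S$), or $\Q(\sqrt{\pm p})$, with $\Q(\sqrt p)$ ruled out because $D(\R)$ is a division algebra. (The remaining possibility $\Q(\sqrt{-p})\hookrightarrow D(\Q)$ still needs to be excluded, which is a gap in the blanket formulation of Lemma \ref{lem1.1} itself rather than something your argument could repair --- but enlarging $H$ to $\Q^\times$ makes matters strictly worse.) So the fix is to replace your target group $D(\Q)^\times/\Q^\times$ by $D(\Q)^\times/\Z[1/p]^\times$ and argue torsion-freeness there.
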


\begin{proof}
$\Ga$ is a subgroup of $D(\Q)^\times/\Z[1/p]^\times$ which has no torsion elements by Lemma \ref{lem1.1}.
\end{proof}

Keep the prime $p\notin S$ fixed and let $Y$ be the Bruhat-Tits building of $G=\CG(\Q_p)$.
Then $Y$ is a regular tree of valency $p+1$, \cite{Serre}.
The group $\Ga$ acts freely and discontinuously  on the contractible space $Y$, so $\Ga$ is the fundamental group of the finite graph $X=\Ga\bs Y$. 

\begin{lemma}
No element $\ga\ne 1$ of $\Ga$ is conjugate to its inverse.
\end{lemma}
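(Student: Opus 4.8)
The plan is to lift the relation into the quaternion algebra $D(\Q)$ and then play the classification of $\Q$-subalgebras in Lemma \ref{lem1.1} against the torsion-freeness of $\Ga$ just established. Suppose, for a contradiction, that $\ga\in\Ga\sm\{1\}$ and $\delta\in\Ga$ satisfy $\delta\ga\delta^{-1}=\ga^{-1}$. Since $\Ga=\CG(\Z[1/p])$ embeds into $\CG(\Q)=D(\Q)^\times/\Q^\times$, I would choose lifts $g,h\in D(\Q)^\times$ of $\ga$ and $\delta$; the hypothesis then reads $hgh^{-1}=\la g^{-1}$ for some $\la\in\Q^\times$. As $\ga\neq 1$, the lift $g$ does not lie in $\Q^\times$, so the commutative $\Q$-subalgebra $F=\Q[g]$ of $D$ has $\Q$-dimension $2$ by Lemma \ref{lem1.1}; that is, $F$ is a quadratic number field.

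The next step is to note that conjugation by $h$, being a $\Q$-algebra automorphism of $D$, carries $F=\Q[g]$ onto $\Q[\la g^{-1}]=\Q[g^{-1}]=F$ and hence restricts to an element $\tau$ of $\operatorname{Gal}(F/\Q)$, a group of order two. If $\tau=\mathrm{id}$, then $g=hgh^{-1}=\la g^{-1}$, so $g^2=\la\in\Q^\times$ and $\ga$ has order dividing $2$ in $\Ga$; torsion-freeness forces $\ga=1$, a contradiction. If $\tau$ is the non-trivial automorphism $\sigma$ of $F$, then conjugation by $h^2$ induces $\sigma^2=\mathrm{id}$ on $F$, so $h^2$ commutes with $g$; thus the $\Q$-subalgebra generated by $g$ and $h^2$ is commutative, so by Lemma \ref{lem1.1} it has dimension $1$ or $2$ (it cannot be all of $D$, which is non-commutative), and since it contains $F$ it equals $F$. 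Hence $h^2\in F$, and as $h$ trivially commutes with $h^2$ we get $\sigma(h^2)=h^2$, so $h^2$ lies in the fixed field $\Q$. Then $\delta^2=1$ in $\Ga$, so $\delta=1$ by torsion-freeness, whence $\ga=\ga^{-1}$ and again $\ga=1$. In every case we contradict $\ga\neq 1$, which proves the lemma.

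I expect the only delicate point to be the identification, in the second case, of the centralizer of the quadratic subfield $F$ inside $D$ with $F$ itself; I would obtain this directly from Lemma \ref{lem1.1}, applied to the commutative algebra generated by $g$ and $h^2$ as above, so that the argument invokes nothing beyond results already available in the paper. Everything else is formal once the lift to $D(\Q)^\times$ and the reduction to $\operatorname{Gal}(F/\Q)$ are in hand.
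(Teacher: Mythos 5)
Your proof is correct, but it takes a genuinely different route from the paper's. The paper disposes of the lemma in two lines of topology: since $\Ga$ is torsion-free it acts freely on the tree $Y$, hence is the fundamental group of the finite graph $X=\Ga\bs Y$, in particular a free group whose conjugacy classes are free homotopy classes of loops, and no nontrivial element of a free group (no nontrivial loop in a graph) is conjugate (freely homotopic) to its inverse. You instead argue inside the quaternion algebra: a conjugating element $h$ must normalize the quadratic field $F=\Q[g]$ and therefore acts on it through $\operatorname{Gal}(F/\Q)$, and in either case you force $g^2\in\Q^\times$ or $h^2\in\Q^\times$, i.e.\ a $2$-torsion element of $\CG(\Q)$ lying in $\Ga$, contradicting the torsion-freeness lemma; the identification of the commutative algebra $\Q[g,h^2]$ with $F$ via the dimension count of Lemma \ref{lem1.1} closes the only delicate step, as you note. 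Both arguments rest on the same earlier input (torsion-freeness of $\Ga$), but yours additionally uses the subalgebra classification of Lemma \ref{lem1.1}, while the paper's quietly relies on the nontrivial group-theoretic fact that in a free group no element $\ne 1$ is conjugate to its inverse. What your argument buys is independence from the building/graph picture: it shows that any element of $\CG(\Q)$ conjugating $\ga$ to $\ga^{-1}$ must be $2$-torsion modulo the centre --- and such elements genuinely exist in $\CG(\Q)$, since conjugation by a pure quaternion inverts every element of $F$ modulo $\Q^\times$ --- so the decisive point is torsion-freeness of $\Ga$ itself, and your proof yields the statement for conjugation inside any torsion-free subgroup of $D(\Q)^\times/\Q^\times$. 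The paper's proof is shorter given the geometric setup already in place, but your algebraic argument is a valid and somewhat more self-contained alternative.
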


\begin{proof}
As $\Ga$ is the fundamental group of the graph $X$, its conjugacy classes are in bijection with the homotopy class of loops $S^1\to X$.
Not non-trivial loop in a graph is homotopic to its inverse.
\end{proof}

Every element $\ga\in\Ga\sm\{ 1\}$ closes a geodesic on $X$, the length of which equals
$$
l(\ga)=\min_{y\in Y}d(\ga y,y),
$$
where the minimum is taken over all vertices $y$ in the tree $Y$ and $d(\ga y,y)$ is the graph distance.
Consider $\ga$ as an element of $G=\PGL_2(\Q_q)$.
A \e{$\Z_q$-lattice} in $\Q_q^2$ is a finitely generated $\Z_p$-submodule which generates $\Q_q^2$ as a $\Q_q$-vector space.
The multiplicative group $\Q_p^\times$ acts on the set of all $\Z_p$-lattices by multiplication.
The vertices of the tree $Y$ can be identified with the set of $\Q_p^\times$-orbits of lattices.
Two lattice classes $[L]$ and $[L']$ are connected by an edge  if and only if there are representatives with $pL\subset L'\subset L$.
It follows that the graph distance between two lattice classes $[L]$ and $[L']$ is the smallest $k\in\N_0$ such that there are representatives with $p^kL\subset L'\subset L$.
So let $\ga\in\Ga$ and let $L$ be a lattice such that $l(\ga)=d(\ga [L],[L])$.
So we can assume that $\ga$ is represented by an element $\tilde\ga\in D(\Z[1/p])^\times\subset \GL_2(\Q_p)$, such that $p^{l(\ga)}L\subset \tilde\ga L\subset L$.
By elementary divisor theory there exists a basis $e_1,e_2$ of $L$ such that $L=\Z_p e_1\oplus\Z_pe_2$ and
$\tilde\ga L=\Z_p e_1\oplus p^{l(\ga)}\Z_pe_2$.
In other words, there are $A,B\in\GL_2(\Z_p)$ such that
$$
\tilde\ga=A\mat 1\ \ {p^{l(\ga)}}B.
$$
Or, again in other words, with $F_\ga$ being the centralizer of $\tilde\ga$ in $D(\Q)$ and $\CO_\ga$ denoting the order $F_\ga\cap D(\Z)$ we get that $N_{F_\ga/\Q}(\tilde\ga)=p^{l(\ga)}m$ with $m$ coprime to $p$.
On the other hand, as $\tilde\ga\in\CO_\ga^\times$, its norm needs to be in $\Z[1/p]^\times$ and so $m=\pm 1$ and as $F$ is imaginary quadratic, finally $m=1$.

An element $\ga$ of $\Ga$ is called \e{primitive}, if $\ga=\sigma^n$ for $\ga\in\Ga$ and $n\in\N$ implies $n=1$.
Note that every element $\ga\ne 1$ of $\Ga$ is a positive power of a unique primitive element.
From the above it follows that if $\ga$ is primitive, then 
$$
l(\ga)=\min
\{ k\in\N: p^k\in N_{F/\Q}(\CO_\ga\sm\Q)\},
$$
Note that if $F$ is a quadratic subfield of $D(\Q)$, then $F\otimes_\Q\R$ injects into the division algebra $D(\R)$, which implies that $F$ is an imaginary quadratic field.

Let $L(S)$ be the set of isomorphy classes of $\Z[1/q]$-orders $\La$ in imaginary quadratic fields $F$, such that every $p\in S$ is non-decomposed in $F$ and $\La$ is maximal at every $p\in S$.
We just found a natural map
$$
\Psi:\Ga/\text{conjugation}\to L(S)
$$
given by $\psi(\ga)=\La_\ga$.

For a given $\Z[1/q]$-order $\La$ in $F$ let $\CO_\La=\La\cap\CO_F$ and let
$$
l_q(\La)=\min\{l\in\N: q^l\in N_{F/\Q}(\CO_\La\sm\Z)\},
$$
where the minimum of the empty set is $+\infty$ and $N_{F/\Q}$ is the norm map.

\begin{lemma}\label{lem2.4}
The image of the map $\Psi$ consists of all classes $[\La]\in L(S)$ with $l_q(\La)<\infty$.
Each such class $[\CO]$ in $L(S)$ has 
$$
2f_S(\La)h(\La)
$$
many preimages.
\end{lemma}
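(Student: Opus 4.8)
\emph{Description of the image.} The inclusion ``$\subseteq$'' is essentially the computation preceding the lemma. Given $\ga\in\Ga\sm\{1\}$, choose a representative $\tilde\ga\in D(\Z[1/p])^\times$ and a vertex of $Y$ realising $l(\ga)$; the elementary-divisor form of $\tilde\ga$ at $p$ puts $\tilde\ga$ in $\M_2(\Z_p)$ with reduced norm $p^{l(\ga)}$, while $\tilde\ga\in D(\Z_\ell)$ for $\ell\ne p$ because $\tilde\ga\in D(\Z[1/p])^\times$. Hence the reduced trace and norm of $\tilde\ga$ are integers, i.e.\ $\tilde\ga$ is integral over $\Z$; it lies in $F_\ga\cap D(\Z[1/p])=\La_\ga$, so in $\CO_{\La_\ga}=\La_\ga\cap\CO_{F_\ga}$, and $\tilde\ga\notin\Q$, whence $l_p(\La_\ga)\le l(\ga)<\infty$. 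For ``$\supseteq$'', let $[\La]\in L(S)$ with $l_p(\La)<\infty$ and pick $x\in\CO_\La\sm\Z$ of reduced norm $p^k$. Since $\La$ is maximal at every $\ell\in S$, Lemma~\ref{lem1.2} gives $\sigma\colon F\hookrightarrow D(\Q)$ with $\La_\sigma=\La$; then $\sigma(x)\in D(\Z[1/p])$ has reduced norm $p^k\in\Z[1/p]^\times$, hence lies in $D(\Z[1/p])^\times$ and represents some $\ga\ne1$ in $\Ga$. Its centraliser $F_\ga$ contains $\sigma(F)$ and hence equals it by Lemma~\ref{lem1.1}, so $\La_\ga=\sigma(F)\cap D(\Z[1/p])=\sigma(\La)\cong\La$, i.e.\ $\Psi(\ga)=[\La]$.

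\emph{Counting the preimages.} Fix $F$ and realise it inside $D(\Q)$ with $\La=F\cap D(\Z[1/p])$; note that $p$ splits in $F$, since every $\ga\ne1$ in $\Ga$ is hyperbolic on $Y$ and the two ends of its axis give $\Q_p$-rational eigenlines, so $F_\ga\otimes\Q_p\cong\Q_p\times\Q_p$. I would assemble the count from three ingredients.
\begin{enumerate}[\rm (i)]
\item For $\ga\ne1$ the centraliser $Z_\Ga(\ga)$ acts faithfully by translations on the axis of $\ga$, hence is infinite cyclic, and one identifies $Z_\Ga(\ga)=\Ga\cap F_\ga^\times/\Q^\times=\La_\ga^\times/\Z[1/p]^\times$. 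Here torsion-freeness of $\Ga$ enters: an element of $\Ga$ normalising $F_\ga$ but conjugating $\tilde\ga$ to its negative would force the reduced trace of $\tilde\ga$ to vanish, hence $\ga^2=1$; ruling this out keeps the centraliser from being twice as large. It follows that each subfield $E\subset D(\Q)$ with $E\cong F$ and $E\cap D(\Z[1/p])\cong\La$ carries exactly two primitive classes, $[\ga_0]$ and $[\ga_0^{-1}]$, and these are distinct.
\item By Lemma~\ref{lem1.3}, the set $\Sigma(\La)$ of embeddings $\sigma$ with $\La_\sigma=\La$ has $|\Sigma(\La)/D(\Z[1/p])^\times|=f_S(\La)h(\La)$; and the Galois action $\sigma\mapsto\sigma\circ\tau$ on $\Sigma(\La)/D(\Z[1/p])^\times$ is free, since if $\sigma\circ\tau$ were $D(\Z[1/p])^\times$-conjugate to $\sigma$ then any $\ga$ with $F_\ga=\sigma(F)$ would be conjugate to its inverse, which no non-trivial element of $\Ga$ is.
\item Match the preimages of $[\La]$ with the data in (i) and (ii): a preimage is a primitive conjugacy class, recording the $\Ga$-conjugacy class of its subfield $F_\ga$---equivalently a Galois-orbit in $\Sigma(\La)/D(\Z[1/p])^\times$---together with which of $[\ga_0]$, $[\ga_0^{-1}]$ it is. Running the correspondence backwards, and verifying by localisation at the finite places that the local embedding numbers at the primes of $S$ (each contributing $f_\ell(F)$) and the class number $h(\La)$ multiply correctly, should yield the stated value $2f_S(\La)h(\La)$.
\end{enumerate}

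\emph{Expected main obstacle.} The real difficulty is pinning down the constant exactly. Three symmetries each threaten a spurious factor $2$---orientation reversal $\ga\leftrightarrow\ga^{-1}$ of a closed geodesic, the Galois twist $\sigma\leftrightarrow\sigma\circ\tau$ of embeddings, and the identification of isomorphic quadratic orders inside $D$---so they must be disentangled with care, and the two structural facts about $\Ga$ (torsion-freeness, and that no non-trivial element is conjugate to its inverse) are exactly what make the bookkeeping come out. Granting the constant, the remaining work---the local embedding computations at the places of $S$ and the reduction to $h(\La)$---runs in close parallel to the proof of Lemma~\ref{lem1.3}.
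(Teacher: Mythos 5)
Your overall route is the paper's: the image is handled by the norm computation preceding the lemma together with Lemma \ref{lem1.2}, and the count is to come from Lemma \ref{lem1.3} combined with the facts that $\Ga$ is torsion-free and that no non-trivial element of $\Ga$ is conjugate to its inverse. Your treatment of the image (both inclusions) is fine and in fact more explicit than the paper, which leaves this part essentially to the preceding discussion.

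The genuine gap is in the counting, and it is precisely the factor of $2$ you flag as the ``expected main obstacle'' but never resolve. In your step (iii) a preimage is recorded as a pair (Galois orbit in $\Sigma(\La)/D(\Z[1/p])^\times$, choice of $[\ga_0]$ or $[\ga_0^{-1}]$). But by your own step (ii) the Galois twist acts freely on $\Sigma(\La)/D(\Z[1/p])^\times$ (correct, granted the paper's lemma on conjugation to inverses: a $u\in D(\Z[1/p])^\times$ carrying $\sigma$ to $\sigma\circ\tau$ would induce the non-trivial automorphism of $E=\sigma(F)$ and hence conjugate $\ga_0$ to $\ga_0^{-1}$ in $\Ga$). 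Since $\sigma$ and $\sigma\circ\tau$ have the same image subfield and therefore determine the same unordered pair $\{\ga_0,\ga_0^{-1}\}$, your bookkeeping gives (number of Galois orbits)$\,\times 2=\tfrac12 f_S(\La)h(\La)\cdot 2=f_S(\La)h(\La)$, not $2f_S(\La)h(\La)$. The paper's proof obtains $2f_S(\La)h(\La)$ by attaching the two non-conjugate generators to each of the $f_S(\La)h(\La)$ embedding classes of Lemma \ref{lem1.3} separately, i.e.\ it does not pass to Galois orbits at all; so your added observation (ii) is in direct tension with the constant you are asked to prove, and ``running the correspondence backwards'' cannot simply ``yield the stated value'' without first deciding whether the correct matching is embeddings-with-generator or subfields-with-generator. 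Since the exact constant is the entire content of the lemma (it feeds directly into the asymptotic of the main theorem), leaving this unresolved means the counting half of the statement is not proved; you must either exhibit a genuine bijection between primitive classes over $[\La]$ and a set of size $2f_S(\La)h(\La)$, or explain why the halving by the free Galois action does not occur in the count of Lemma \ref{lem1.3}.
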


\begin{proof}
Let $[\La]$ be a class in $L(S)$ with $l_q(\La)<\infty$. 
We can assume that $\La=F\cap D(\Z[1/q])=(F\cap D(\Z))[1/q]$.
By Lemma \ref{lem1.3}, any given $\Z[1/q]$-order $\La$ comes about in this way with $f_S(\La)h(\La)$ many different embeddings up to conjugacy, each embedding gives two generators $\ga$ and $\ga^{-1}$,
 which are not conjugate, therefore we get the claimed number of preimages.
\end{proof}

\begin{theorem}
Let $\Delta$ be the greatest common divisor of the numbers $l_p(\La)$ as $\CO$ ranges over $L(S)$.
Then one has
$$
\sum_{\substack{\La\in L(S)\\ l_p(\La)=\Delta m}}f_S(\La)h(\La)\quad\sim\quad \frac{p^{\Delta m}}{2m}
$$
as $m\to\infty$.
\end{theorem}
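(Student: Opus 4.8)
\emph{Proof proposal.} The plan is to read off both sides of the asymptotic from the geometry of the finite graph $X=\Ga\bs Y$ and then to feed in the prime geodesic theorem for $X$. Since $Y$ is the $(p+1)$-regular tree and $\Ga$ acts freely, $X$ is a finite connected $(p+1)$-regular graph with $\pi_1(X)\cong\Ga$, and the conjugacy classes of $\Ga\sm\{1\}$ are in bijection with the backtrackless, tailless closed geodesics of $X$ taken up to cyclic rotation, a class $[\ga]$ having length $l(\ga)$. Let $\pi(n)$ be the number of those geodesics of length $n$ that are primitive, equivalently the number of conjugacy classes of primitive $\ga\in\Ga$ with $l(\ga)=n$; by the lemma above $[\ga]\neq[\ga^{-1}]$, so these two are counted separately.

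First I would recall the Ihara--Bass rationality of the zeta function
$$
Z_X(u)=\prod_{[\ga]\ \mathrm{primitive}}\frac{1}{1-u^{l(\ga)}}=\frac{1}{(1-u^2)^{r-1}\det(I-Au+pu^2I)},
$$
with $A$ the adjacency operator of $X$ and $r$ its first Betti number, and extract from it the prime geodesic theorem (see \cite{Terras}, and \cites{Bass,Sunada}). The dominant singularity of $Z_X$ is the simple pole at $u=1/p$, coming from the eigenvalue $p+1$ of $A$ on the connected regular graph $X$; writing $N_n=\sum_{d\mid n}d\,\pi(d)$ for the coefficients of $u\tfrac{d}{du}\log Z_X(u)=\sum_nN_nu^n$, the period of this power series equals the greatest common divisor of all lengths $l(\ga)$, which --- because $l(\ga)=l_p(\La_\ga)$ for primitive $\ga$, as established just before Lemma \ref{lem2.4} --- is exactly $\Delta$. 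Hence $N_n=0$ for $\Delta\nmid n$ and $N_{\Delta m}=\Delta\,p^{\Delta m}+o(p^{\Delta m})$, the remaining eigenvalues (Ramanujan or not) contributing only lower order terms; M\"obius inversion then yields
$$
\pi(\Delta m)\ \sim\ \frac{p^{\Delta m}}{m}\qquad(m\to\infty).
$$

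It remains to count primitive conjugacy classes of a given length in arithmetic terms. For primitive $\ga$ one has $\La_\ga\in L(S)$ with $l_p(\La_\ga)=l(\ga)$, so the primitive classes with $l(\ga)=\Delta m$ are precisely those sent by $\ga\mapsto[\La_\ga]$ into $\{[\La]\in L(S):l_p(\La)=\Delta m\}$; moreover, by Lemma \ref{lem2.4} (together with Lemma \ref{lem1.3} and torsion-freeness of $\Ga$, which make each stabilizer infinite cyclic), every such $[\La]$ has exactly $2f_S(\La)h(\La)$ primitive preimages, all necessarily of length $l_p(\La)=\Delta m$. Therefore
$$
\pi(\Delta m)=2\sum_{\substack{\La\in L(S)\\ l_p(\La)=\Delta m}}f_S(\La)h(\La),
$$
and combining this identity with the displayed asymptotic for $\pi(\Delta m)$ proves the theorem. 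Granting the prime geodesic theorem for $X$ (as announced in the abstract), the only delicate points are the identification of the analytic period of $Z_X$ with the arithmetic invariant $\Delta$ and the verification that ``primitive'' matches the minimality defining $l_p$ with no coincidences in the count $2f_S(\La)h(\La)$ --- both of which rest on the torsion-freeness of $\Ga$ and on Lemmas \ref{lem1.3} and \ref{lem2.4}.
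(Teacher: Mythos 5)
Your proposal is correct and follows essentially the same route as the paper: the paper's proof simply combines Lemma \ref{lem2.4} with the graph prime geodesic theorem from Section 2.7 of \cite{Terras}, which is exactly your reduction $\pi(\Delta m)=2\sum f_S(\La)h(\La)$ together with $\pi(\Delta m)\sim p^{\Delta m}/m$. The only difference is that you spell out the Ihara--Bass determinant formula, the identification of the period with $\Delta$, and the M\"obius-inversion step, which the paper leaves to the cited reference.
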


\begin{proof}
With Lemma \ref{lem2.4} the theorem  follows from the prime geodesic theorem as in Section 2.7 of \cite{Terras}.
\end{proof}

\begin{bibdiv} \begin{biblist}

\bib{Avdis}{article}{
   author={Avdispahi{\'c}, Muharem},
   author={Gu{\v{s}}i{\'c}, D{\v{z}}enan},
   title={On the error term in the prime geodesic theorem},
   journal={Bull. Korean Math. Soc.},
   volume={49},
   date={2012},
   number={2},
   pages={367--372},
   issn={1015-8634},
   doi={10.4134/BKMS.2012.49.2.367},
}

\bib{Bass}{article}{
   author={Bass, Hyman},
   title={The Ihara-Selberg zeta function of a tree lattice},
   journal={Internat. J. Math.},
   volume={3},
   date={1992},
   number={6},
   pages={717--797},
   issn={0129-167X},
}

\bib{Borel}{book}{
   author={Borel, Armand},
   title={Introduction aux groupes arithm\'etiques},
   series={Publications de l'Institut de Math\'ematique de l'Universit\'e de
   Strasbourg, XV. Actualit\'es Scientifiques et Industrielles, No. 1341},
   publisher={Hermann, Paris},
   date={1969},
   pages={125},
}

\bib{class}{article}{
   author={Deitmar, Anton},
   title={Class numbers of orders in cubic fields},
   journal={J. Number Theory},
   volume={95},
   date={2002},
   number={2},
   pages={150--166},
   issn={0022-314X},
}

\bib{PGTHigher}{article}{
   author={Deitmar, A.},
   title={A prime geodesic theorem for higher rank spaces},
   journal={Geom. Funct. Anal.},
   volume={14},
   date={2004},
   number={6},
   pages={1238--1266},
   issn={1016-443X},
   doi={10.1007/s00039-004-0490-7},
}

\bib{classNC}{article}{
   author={Deitmar, Anton},
   author={Hoffmann, Werner},
   title={Asymptotics of class numbers},
   journal={Invent. Math.},
   volume={160},
   date={2005},
   number={3},
   pages={647--675},
   issn={0020-9910},
   doi={10.1007/s00222-004-0423-y},
}

\bib{PGTSL4}{article}{
   author={Deitmar, Anton},
   author={Pavey, Mark},
   title={A prime geodesic theorem for ${\rm SL}_4$},
   journal={Ann. Global Anal. Geom.},
   volume={33},
   date={2008},
   number={2},
   pages={161--205},
   issn={0232-704X},
   doi={10.1007/s10455-007-9078-4},
}

\bib{Ihara1}{article}{
   author={Ihara, Yasutaka},
   title={On discrete subgroups of the two by two projective linear group
   over ${\germ p}$-adic fields},
   journal={J. Math. Soc. Japan},
   volume={18},
   date={1966},
   pages={219--235},
   issn={0025-5645},
}

\bib{Ihara2}{article}{
   author={Ihara, Yasutaka},
   title={Discrete subgroups of ${\rm PL}(2,\,k_{\wp })$},
   conference={
      title={Algebraic Groups and Discontinuous Subgroups (Proc. Sympos.
      Pure Math., Boulder, Colo., 1965)},
   },
   book={
      publisher={Amer. Math. Soc.},
      place={Providence, R.I.},
   },
   date={1966},
   pages={272--278},
}

\bib{Hash0}{article}{
   author={Hashimoto, Ki-ichiro},
   author={Hori, Akira},
   title={Selberg-Ihara's zeta function for $p$-adic discrete groups},
   conference={
      title={Automorphic forms and geometry of arithmetic varieties},
   },
   book={
      series={Adv. Stud. Pure Math.},
      volume={15},
      publisher={Academic Press},
      place={Boston, MA},
   },
   date={1989},
   pages={171--210},
}

\bib{Hash1}{article}{
   author={Hashimoto, Ki-ichiro},
   title={Zeta functions of finite graphs and representations of $p$-adic
   groups},
   conference={
      title={Automorphic forms and geometry of arithmetic varieties},
   },
   book={
      series={Adv. Stud. Pure Math.},
      volume={15},
      publisher={Academic Press},
      place={Boston, MA},
   },
   date={1989},
   pages={211--280},
}

\bib{Hash2}{article}{
   author={Hashimoto, Ki-ichiro},
   title={On zeta and $L$-functions of finite graphs},
   journal={Internat. J. Math.},
   volume={1},
   date={1990},
   number={4},
   pages={381--396},
   issn={0129-167X},
}

\bib{Hash3}{article}{
   author={Hashimoto, Ki-ichiro},
   title={Artin type $L$-functions and the density theorem for prime cycles
   on finite graphs},
   journal={Internat. J. Math.},
   volume={3},
   date={1992},
   number={6},
   pages={809--826},
   issn={0129-167X},
}

\bib{Hash4}{article}{
   author={Hashimoto, Ki-ichiro},
   title={Artin $L$-functions of finite graphs and their applications},
   language={Japanese},
   note={Algebraic combinatorics (Japanese) (Kyoto, 1992)},
   journal={S\=uri\-kaise\-kikenky\=usho K\=oky\=uroku},
   number={840},
   date={1993},
   pages={70--81},
}

\bib{Hejhal}{book}{
   author={Hejhal, Dennis A.},
   title={The Selberg trace formula for ${\rm PSL}(2,R)$. Vol. I},
   series={Lecture Notes in Mathematics, Vol. 548},
   publisher={Springer-Verlag, Berlin-New York},
   date={1976},
   pages={vi+516},
}

\bib{KatokHassel}{book}{
   author={Katok, Anatole},
   author={Hasselblatt, Boris},
   title={Introduction to the modern theory of dynamical systems},
   series={Encyclopedia of Mathematics and its Applications},
   volume={54},
   note={With a supplementary chapter by Katok and Leonardo Mendoza},
   publisher={Cambridge University Press, Cambridge},
   date={1995},
   pages={xviii+802},
   isbn={0-521-34187-6},
}

\bib{Sunada}{article}{
   author={Kotani, Motoko},
   author={Sunada, Toshikazu},
   title={Zeta functions of finite graphs},
   journal={J. Math. Sci. Univ. Tokyo},
   volume={7},
   date={2000},
   number={1},
   pages={7--25},
   issn={1340-5705},
}

\bib{Margulis}{book}{
   author={Margulis, G. A.},
   title={Discrete subgroups of semisimple Lie groups},
   series={Ergebnisse der Mathematik und ihrer Grenzgebiete (3) [Results in
   Mathematics and Related Areas (3)]},
   volume={17},
   publisher={Springer-Verlag, Berlin},
   date={1991},
   pages={x+388},
   isbn={3-540-12179-X},
}

\bib{Neukirch}{book}{
   author={Neukirch, J{\"u}rgen},
   title={Algebraic number theory},
   series={Grundlehren der Mathematischen Wissenschaften [Fundamental
   Principles of Mathematical Sciences]},
   volume={322},
   note={Translated from the 1992 German original and with a note by Norbert
   Schappacher;
   With a foreword by G. Harder},
   publisher={Springer-Verlag, Berlin},
   date={1999},
   pages={xviii+571},
   isbn={3-540-65399-6},
}

\bib{Pierce}{book}{
   author={Pierce, Richard S.},
   title={Associative algebras},
   series={Graduate Texts in Mathematics},
   volume={88},
   note={Studies in the History of Modern Science, 9},
   publisher={Springer-Verlag, New York-Berlin},
   date={1982},
   pages={xii+436},
   isbn={0-387-90693-2},
}

\bib{Reiner}{book}{
   author={Reiner, Irving},
   title={Maximal orders},
   series={London Mathematical Society Monographs. New Series},
   volume={28},
   note={Corrected reprint of the 1975 original;
   With a foreword by M. J.\ Taylor},
   publisher={The Clarendon Press, Oxford University Press, Oxford},
   date={2003},
   pages={xiv+395},
   isbn={0-19-852673-3},
}

\bib{Sarnak}{article}{
   author={Sarnak, Peter},
   title={Class numbers of indefinite binary quadratic forms},
   journal={J. Number Theory},
   volume={15},
   date={1982},
   number={2},
   pages={229--247},
   issn={0022-314X},
   doi={10.1016/0022-314X(82)90028-2},
}

\bib{Serre}{book}{
   author={Serre, Jean-Pierre},
   title={Trees},
   series={Springer Monographs in Mathematics},
   note={Translated from the French original by John Stillwell;
   Corrected 2nd printing of the 1980 English translation},
   publisher={Springer-Verlag, Berlin},
   date={2003},
   pages={x+142},
   isbn={3-540-44237-5},
}

\bib{Sound}{article}{
   author={Soundararajan, K.},
   author={Young, Matthew P.},
   title={The prime geodesic theorem},
   journal={J. Reine Angew. Math.},
   volume={676},
   date={2013},
   pages={105--120},
   issn={0075-4102},
}

\bib{ST1}{article}{
   author={Stark, H. M.},
   author={Terras, A. A.},
   title={Zeta functions of finite graphs and coverings},
   journal={Adv. Math.},
   volume={121},
   date={1996},
   number={1},
   pages={124--165},
   issn={0001-8708},
   doi={10.1006/aima.1996.0050},
}

\bib{ST2}{article}{
   author={Stark, H. M.},
   author={Terras, A. A.},
   title={Zeta functions of finite graphs and coverings. II},
   journal={Adv. Math.},
   volume={154},
   date={2000},
   number={1},
   pages={132--195},
   issn={0001-8708},
   doi={10.1006/aima.2000.1917},
}

\bib{ST3}{article}{
   author={Terras, A. A.},
   author={Stark, H. M.},
   title={Zeta functions of finite graphs and coverings. III},
   journal={Adv. Math.},
   volume={208},
   date={2007},
   number={1},
   pages={467--489},
   issn={0001-8708},
   doi={10.1016/j.aim.2006.03.002},
}

\bib{Sun1}{article}{
   author={Sunada, Toshikazu},
   title={$L$-functions in geometry and some applications},
   conference={
      title={Curvature and topology of Riemannian manifolds},
      address={Katata},
      date={1985},
   },
   book={
      series={Lecture Notes in Math.},
      volume={1201},
      publisher={Springer},
      place={Berlin},
   },
   date={1986},
   pages={266--284},
}

\bib{Sun2}{article}{
   author={Sunada, Toshikazu},
   title={Fundamental groups and Laplacians},
   conference={
      title={Geometry and analysis on manifolds},
      address={Katata/Kyoto},
      date={1987},
   },
   book={
      series={Lecture Notes in Math.},
      volume={1339},
      publisher={Springer},
      place={Berlin},
   },
   date={1988},
   pages={248--277},
}

\bib{Terras}{book}{
   author={Terras, Audrey},
   title={Zeta functions of graphs},
   series={Cambridge Studies in Advanced Mathematics},
   volume={128},
   note={A stroll through the garden},
   publisher={Cambridge University Press, Cambridge},
   date={2011},
   pages={xii+239},
   isbn={978-0-521-11367-0},
}

\end{biblist} \end{bibdiv}

{\small Mathematisches Institut\\
Auf der Morgenstelle 10\\
72076 T\"ubingen\\
Germany\\
\tt deitmar@uni-tuebingen.de}

\end{document}